\tikzset{
basic/.style  = {draw, text width=1.5cm, drop shadow, font=\sffamily, rectangle},
  root/.style   = {basic, rounded corners=1pt, thin, align=center,
                   fill=blue!5},
  level 2/.style = {basic, rounded corners=1pt, thin,align=center, fill=white!60,
                   text width=8em},
  level 3/.style = {basic, thin, align=left, fill=pink!60, text width=8em}
}
\newcommand{\R}{\mathbb R}
\newcommand{\I}{\mathbb I}
\newcommand{\N}{\mathbb N}
\newcommand{\Ro}{\accentset{\circ}{\mathbb R}}
\newcommand{\bPi}{\bar{\Pi}}
\newcommand{\bY}{\bar{Y}}
\newcommand{\bU}{\bar{U}}
\newcommand{\bdelta}{\bar{\delta}}
\newcommand{\brho}{\bar{\rho}}
\newcommand{\bp}{\bar{p}}
\newcommand{\bc}{\bar{c}}
\newcommand{\bs}{\bar{s}}
\newcommand{\bq}{\bar{q}}
\newcommand{\tU}{\tilde{U}}
\newcommand{\hU}{\hat{U}}
\newcommand{\hK}{\hat{K}}
\newcommand{\cU}{\mathcal{U}}
\newcommand{\LX}{\mathcal{L}}
\newcommand{\cE}{\mathcal{E}}
\newcommand{\cR}{\mathcal{R}}
\newcommand{\cT}{\mathcal{T}}
\newcommand{\cM}{\mathcal{M}}
\newcommand{\cA}{\mathcal{A}}
\newcommand{\cJ}{\mathcal{J}}
\newcommand{\bfq}{{\bf q}}
\newcommand{\bfL}{{\bf L}}
\newcommand{\bfC}{{\bf C}}
\newcommand{\bfBV}{{\bf BV}}
\newcommand{\ds}{\displaystyle}
\newtheorem{theorem}{Theorem}[section]
\newtheorem{definition}{Definition}[section]
\newcommand{\abstand}{\vspace{\baselineskip}}
\author{P.~Mindt, J.~Lang, P.~Domschke}
\title{Entropy-Preserving Coupling of Hierarchical Gas Models}
\author{
Pascal Mindt, Jens Lang\footnote{corresponding author} , and Pia Domschke
\\[0.5cm]
{\small \it Technische Universit\"at Darmstadt} \\
{\small \it Dolivostra{\ss}e 15, 64293 Darmstadt, Germany}\\
{\small mindt@mathematik.tu-darmstadt.de}\\
{\small lang@mathematik.tu-darmstadt.de}\\
{\small domschke@mathematik.tu-darmstadt.de}
}
\date{December 14, 2019}
\begin{document}

\maketitle

\begin{abstract}
This paper is concerned with coupling conditions at junctions
for transport models which differ in their fidelity to describe
transient flow in gas pipelines. It also includes the integration
of compressors between two pipes with possibly different models.
A hierarchy of three one-dimensional gas transport models is built
through the $3\times 3$ polytropic Euler equations, the $2\times 2$
isentropic Euler equations and a simplified version of it for
small velocities. To ensure entropy preservation, we make use of the
novel entropy-preserving coupling conditions recently proposed by
{\sc Lang} and {\sc Mindt} [Netw.~Heterog.~Media, 13:177-190, 2018]
and require the continuity of the total enthalpy at the junction and
that the specific entropy for pipes with outgoing flow equals the
convex combination of all entropies that belong to pipes with incoming
flow. We prove the existence and uniqueness of solutions to generalised
Riemann problems at a junction in the neighbourhood of constant coupling
functions and stationary states which belong to the subsonic region. This
provides the basis for the well-posedness of certain Cauchy problems for
initial data with sufficiently small total variation.
\end{abstract}

\noindent {\bf Keywords}: Conservation laws, networks,
Euler equations at junctions, model hierarchy, coupling conditions of
compressible fluids, compressor coupling
\abstand

\noindent {\bf 2010 Mathematics Subject Classification}:
35L60, 35L65, 35Q31, 35R02, 76N10

\section{Introduction}
The transient flow of natural gas through pipeline networks in a
dynamic supply-demand environment has been attracting increasing
interest. Such distribution networks play an important role in
future energy systems. They also allow the storage of renewable
electric energy within a power-to-gas process chain. Simulation
and optimisation of gas pipeline networks require the study of
large scale models ranging from complex compressor stations to
networks of a whole country. There exists a bunch of models based
on the compressible Euler equations to predict the network behaviour
with varying accuracy, see e.g. \cite{BandaHertyKlar2006a,Osiadacz1996,OsiadaczChaczykowski2001}
and the nice overview in \cite{BrouwerGasserHerty2011}.

Since more accurate models are computationally more expensive, an
appropriate use of a hierarchy of models is desirable. In a sequence
of papers \cite{DomschkeDuaStolwijkLangMehrmann2018,DomschkeKolbLang2011,DomschkeKolbLang2015},
we have developed adaptive strategies to automatically control
the model selection, mainly depending on the dynamics of the gas flow.
Generally, simplified models can be applied in regions with low activity,
while sophisticated models have to be used in regions, where the dynamical
behaviour has to be resolved in more detail.

A crucial point in the one-dimensional modelling process of gas networks is
the determination of physically sound coupling conditions at junctions. Beside
the natural mass and energy conservation, the equality of the dynamic pressure
\cite{ColomboMauri2008} or the pressure itself \cite{BandaHertyKlar2006b,Herty2008}
are widely used in the literature. The latter one is the usual choice in the engineering community. For isothermal and isentropic flows, investigations in \cite{Reigstad2014,Reigstad2015}
showed that both pressure-based coupling conditions can deliver non-physical solutions characterized by the production of mechanical energy at a junction and should be
replaced by the equality of enthalpy. Recently, we have extended this result to
$3\times 3$ Euler systems with source terms at a junction of pipes with possibly different cross-sectional areas \cite{LangMindt2018}. We additionally propose entropy-preserving coupling conditions, i.e.,
we require that the specific entropy for pipes with outgoing flow equals the convex combination of all entropies that belong to pipes with incoming flow.

In this paper, we generalize the design of entropy-preserving coupling conditions in
order to account for varying models at a single junction. A hierarchy of three one-dimensional gas transport models which differ in their fidelity is built through the $3\!\times\!3$ polytropic Euler equations, the $2\!\times\!2$ isentropic Euler equations and a simplified version of it, where the kinetic energy is neglected. We also consider the practically important case of a compressor connected by two pipes with possibly different
gas transport models. We first define solutions of generalised one-sided Riemann
problems at a junction and show then by suitable application of the
Implicit Function Theorem the existence and uniqueness of such solutions
in the neighbourhood of constant coupling functions
and stationary states which belong to the subsonic region. This
provides the basis for the well-posedness of certain Cauchy problems for
initial data with sufficiently small total variation.

The paper is organised as follows. In Sect.~\ref{sec:ModelHierarchy},
we introduce a model hierarchy
for polytropic Euler equations. Thermodynamically consistent coupling
conditions are described in Sect.~\ref{sec:ModelCoupling}, including
coupling at junctions and two models of compressor coupling.
In Sect.~\ref{sec:GeneralRiemannProblem}, we study the solvability of
generalized Riemann problems. The corresponding Cauchy problems and their solutions
are studied in Sect.~\ref{sec:CauchyProblemJunction}.

\section{Model Hierarchy for Polytropic Euler Equations} \label{sec:ModelHierarchy}
We consider the one-dimensional polytropic Euler equations with source terms
as our most accurate model~$\cM_1$ to describe the gas flow in a pipe of
infinite length,
\begin{eqnarray}
\label{euler_eqs}
\partial_t U + \partial_x F_{\cM_1}(U)
&=& G_{\cM_1}(x,t,U),\quad (x,t)\in\R\times\R^+,\\
\label{euler_init}
U(x,0) &=& U_0(x), \quad x\in \R,
\end{eqnarray}
with thermodynamic variables and flux functions
\begin{equation}
\label{euler_flux}
U=
\begin{pmatrix}
\rho\\[1mm]
\rho u\\[1mm]
E
\end{pmatrix}\quad\mbox{and}\quad
F_{\cM_1}(U)=
\begin{pmatrix}
\rho u\\[1mm]
\rho u^2+p\\[1mm]
u(E+p)
\end{pmatrix}.
\end{equation}
Here, $\rho$ is the density, $u$ is the velocity, $p$ is the pressure,
and $E$ is the total energy. Let~$c_v>0$ and $c_p>0$ be the specific heat
at constant volume and pressure, respectively. Then, $R=c_p-c_v$ is the
gas constant and $\gamma=c_p/c_v>1$ is the adiabatic exponent. The
relation between the specific internal energy~$e$ and the temperature~$T$ of a
polytropic gas is described by~$e=c_vT$. Together with the total
energy~$E=\rho e+\rho u^2/2$ and the ideal gas law~$p=\rho RT$,
the equation of state for an {\it ideal polytropic gas} in the common form reads
\begin{equation}
p = (\gamma -1)\left( E-\frac12\rho u^2\right).
\end{equation}
For later use, we introduce the mass flux~$q=\rho u$,
the specific entropy~$s$, the total enthalpy~$h$, and
the speed of sound~$c$ defined by
\begin{equation}
\label{euler_shc}
s = c_v\,\ln\left( \frac{p}{\rho^\gamma}\right)+s_0,\quad
h = \frac{E+p}{\rho},\quad\text{and}\quad
c=\sqrt{\left.\frac{\partial p}{\partial \rho}\right|_s}
=\sqrt{\frac{\gamma p}{\rho}},
\end{equation}
where $s_0\ge 0$ is a constant entropy value. More details about the underlying
thermodynamic principles can be found, e.g., in \cite[Sect. 14.4]{LeVeque2002}. The
right-hand side vector~$G_{\cM_1}(x,t,U)$ describes source terms, e.g., gravity and friction.

A first simplification for small disturbances around some background state is the
use of an {\it isentropic flow}, where the entropy $s$ is taken as constant throughout
the gas. In this case, we can drop the third equation in \eqref{euler_eqs}, i.e., the
conservation of energy. The isentropic Euler equations taken as model $\cM_2$ are
\begin{eqnarray}
\label{iso_euler_eqs}
\partial_t U + \partial_x F_{\cM_2}(U)
&=& G_{\cM_2}(x,t,U),\quad (x,t)\in\R\times\R^+,\\
\label{iso-euler_init}
U(x,0) &=& U_0(x), \quad x\in \R,
\end{eqnarray}
with thermodynamic variables and flux functions
\begin{equation}
\label{iso_euler_flux}
U=
\begin{pmatrix}
\rho\\[1mm]
\rho u
\end{pmatrix}\quad\mbox{and}\quad
F_{\cM_2}(U)=
\begin{pmatrix}
\rho u\\[1mm]
\rho u^2+p(\rho)
\end{pmatrix}.
\end{equation}
Taking $s=\bs$ in \eqref{euler_shc}, we get an explicit relation
between pressure and density,
\begin{equation}
p(\rho) = \kappa\rho^\gamma\quad\text{with}\quad
\kappa=e^{\frac{\bs-s_0}{c_v}},
\end{equation}
which serves now as equation of state for the isentropic Euler equations.
Total energy, total entropy and speed of sound simplify to functions
of $\rho$ and $u$,
\begin{equation}
\label{iso_euler_ehc}
E = \frac{\kappa\rho^\gamma}{\gamma-1}+\frac12\rho u^2,\quad
h = \frac{\kappa\gamma}{\gamma-1}\rho^{\gamma-1}+\frac12 u^2\quad
\text{and}\quad c=\sqrt{\kappa\gamma\rho^{\gamma-1}}.
\end{equation}
The isentropic equations are still nonlinear and shocks
can appear, if we allow arbitrary data. Then entropy
and energy will jump to a higher level across the shock,
indicating the correct vanishing-viscosity solution. Although
conservation of energy is no longer satisfied, such isentropic
shocks may be a good approximation to reality, if they are weak
enough. Further arguments are given in \cite[Sect. 14.5]{LeVeque2002}.

In many practical situations, the spatial derivative of
the kinetic energy $\partial_x(\rho u^2)$ can be neglected \cite{Osiadacz1996}.
This yields model $\cM_3$ -- a
further simplification of \eqref{iso_euler_eqs} with
thermodynamic variables and flux functions
\begin{equation}
\label{m3-euler_flux}
U=
\begin{pmatrix}
\rho\\[1mm]
\rho u
\end{pmatrix}\quad\mbox{and}\quad
F_{\cM_3}(U)=
\begin{pmatrix}
\rho u\\[1mm]
p(\rho)
\end{pmatrix}.
\end{equation}
We formally set $G_{\cM_3}= G_{\cM_2}$ and use for the
total energy and enthalpy the following approximations:
\begin{equation}
E = \frac{\kappa\rho^\gamma}{\gamma-1}\quad
\text{and}\quad h = \frac{\kappa\gamma}{\gamma-1}\rho^{\gamma-1}.
\end{equation}
The speed of sound in \eqref{iso_euler_ehc} remains unchanged.

The models $\cM_i$, $i=1,2,3,$ define a hierarchy of models with
decreasing fidelity. Their characteristic eigenvalues are
given by
\begin{eqnarray}
& \lambda_1^{\cM_1}(U) = u-c,\quad
\lambda_2^{\cM_1}(U) = u,\quad
\lambda_3^{\cM_1}(U) = u+c, \\[2mm]
& \lambda_1^{\cM_2}(U) = u-c,\quad
\lambda_2^{\cM_2}(U) = u+c, \\[2mm]
& \lambda_1^{\cM_3}(U) = -c,\quad
\lambda_2^{\cM_3}(U) = c.
\end{eqnarray}
In what follows, we will work within the subsonic region, i.e.,
$|u|<c$. In this case, only $\lambda_2^{\cM_1}(U)$ can
change its sign, depending on the velocity. It will be
always clear to which model the state $U$ belongs.

\section{Thermodynamically Consistent Model Coupling}\label{sec:ModelCoupling}
\subsection{Coupling at Junctions}
In this section, we consider one-dimensional gas flow on a network
consisting of a single junction connecting $N$ pipe sections of
infinite length
\begin{eqnarray}
\label{network_euler_eqs}
\partial_t U^{(j)} + \partial_xF_{m_j}(U^{(j)}) &=& G_{m_j}(x,t,U^{(j)})
,\quad (x,t)\in\R^+\times\R^+,\\
\label{network_euler_init}
U^{(j)}(x,0) &=& U^{(j)}_0(x), \quad x\in \R,
\end{eqnarray}
for $j=1,\ldots,N$. The possibly different models are identified by
the parameters $m_j\in\cM:=\{\cM_1,\cM_2,\cM_3\}$. Each pipe is described
by a vector, $\nu_i\in\R^3\setminus\{0\}$, originating from the
common junction and parameterized by $x\in\R^+$, the real halfline~$[0,\infty)$.
The surface section of the pipe equals $\|\nu_i\|\!\ne\!0$.
We assume $\nu_i\ne \nu_j$ for $i\ne j$.

For each model, we introduce two sets of subsonic data
\begin{eqnarray}
D_{+}^{\cM_1} &\!=\!& \{ U=(\rho,\rho u,E)\in\Ro^+\times\R\times\Ro^+:
\,\lambda_1^{\cM_1}(U)<0<u<\lambda_3^{\cM_1}(U)\},\\
D_{-}^{\cM_1} &\!=\!& \{ U=(\rho,\rho u,E)\in\Ro^+\times\R\times\Ro^+:
\,\lambda_1^{\cM_1}(U)<u<0<\lambda_3^{\cM_1}(U)\},\\
D_{+}^{\cM_i} &\!=\!& \{ U=(\rho,\rho u)\in\Ro^+\times\R:
\,\lambda_1^{\cM_i}(U)<0<u<\lambda_2^{\cM_i}(U)\},\; i=2,3,\\
D_{-}^{\cM_i} &\!=\!& \{ U=(\rho,\rho u)\in\Ro^+\times\R:
\,\lambda_1^{\cM_i}(U)<u<0<\lambda_2^{\cM_i}(U)\},\; i=2,3.
\end{eqnarray}
with $\Ro^+=(0,\infty)$. Due to $\lambda_2^{\cM_1}(U)=u$ and the orientation of
the pipes, we can relate pipes modelled by $\cM_1$ with a flow direction
towards the junction with~$D_{-}^{\cM_1}$ (incoming flow), while~$D_{+}^{\cM_1}$
corresponds to pipes with flow direction away from the junction (outgoing flow).
Since $\lambda_1^{\cM_i}(U)<0$ and $\lambda_2^{\cM_i}(U)>0$ for the isentropic
models~$\cM_i$, $i=2,3$, a distinction between incoming and outgoing pipes is
usually not necessary. However, this separation becomes crucial if these models
are coupled with~$\cM_1$. This point is discussed in more detail in Section
\ref{sec:GeneralRiemannProblem}.

The corresponding index sets of the incoming and outgoing pipes are defined by
\begin{equation}
\I_{i}^{\cM_k}:=\{j:U^{(j)}\in D_{-}^{\cM_k}\}\quad\text{and}\quad
\I_{o}^{\cM_k}:=\{j:U^{(j)}\in D_{+}^{\cM_k}\},\;k=1,2,3.
\end{equation}
For later use, we define the index sets
$\I_i:=\I_i^{\cM_1}\cup\I_i^{\cM_2}\cup\I_i^{\cM_3}$ and
$\I_o:=\I_o^{\cM_1}\cup\I_o^{\cM_2}\cup\I_o^{\cM_3}$ for incoming
and outgoing pipes, and the special numbers
$N_o^{\cM_k}:=\dim\left(\I_o^{\cM_k}\right)$ and
$N^{\cM_k}:=\dim\left(\I_o^{\cM_k}\cup\I_i^{\cM_k}\right)$.
We will only consider cases with
\begin{equation}
\I_i\cup\I_o=\{1,\ldots,N\}.
\end{equation}
The coupling of the different model equations at the junction-pipe
interface is prescribed by a set of coupling conditions of the form
\begin{equation}
\label{euler_ccond}
\Phi \left( U^{(1)}(0^+,t),\ldots,U^{(N)}(0^+,t) \right) = \Pi(t),
\end{equation}
where $\Phi$ is a possibly nonlinear function of the traces
$U^{(j)}(0^+,t)=\lim_{x\rightarrow 0^+}U^{(j)}(x,t)$ of
the unknown variables and $\Pi$ is a coupling constant, which depends
only on time. We will use the entropy-preserving coupling conditions
from \cite{LangMindt2018} for $t>0$,
\begin{align}
\tag{M} \sum_{j=1}^{N} \|\nu_j\| q_j(0^+,t) &= 0, &&\text{(mass conservation),} \label{equ:M}\\
\tag{H} h_j(0^+,t) &= h^*(t), \quad j=1,\ldots,N, &&\text{(equality of enthalpy),} \label{equ:H}\\[3mm]
\tag{S} s_j(0^+,t) &= s^*(t), \quad j\in\I_o  &&\text{(equality of outgoing entropy)} \label{equ:S}
\end{align}
with the entropy mix
\begin{equation}
s^*(t) = \frac{1}{\sum_{j\in\I_i}\,\|\nu_j\|q_j(0^+,t)}
\,\sum_{j\in\I_i}\,\|\nu_j\|(q_js_j)(0^+,t).
\end{equation}
The function $h^*(t)$ in \eqref{equ:H} 
is not prescribed and determined by the flow itself.
It must be eliminated before the coupling function $\Phi$ is defined.
We fix one of the enthalpy equations for a certain $j=j_0$,
which will be specified later, and subtract
from it all the other ones. This gives the coupling conditions
\begin{equation}
0 = \Phi \left( U^{(1)}(0^+,t),\ldots,U^{(N)}(0^+,t) \right) =
\begin{pmatrix}
\sum_{j=1}^{N} \|\nu_j\| q_j(0^+,t)\\[2mm]
h_{j_0}(0^+,t) - h_1(0^+,t)\\
\cdots\\
h_{j_0}(0^+,t) - h_{j_0-1}(0^+,t)\\[2mm]
h_{j_0}(0^+,t) - h_{j_0+1}(0^+,t)\\
\cdots\\
h_{j_0}(0^+,t) - h_N(0^+,t)\\[2mm]
s_{j_1}(0^+,t) - s^*(t)\\
\cdots\\
s_{j_{N_o}}(0^+,t) - s^*(t)
\end{pmatrix}
\end{equation}
with $N_o=\dim(\I_o)$. Note that here $\Pi(t)\equiv 0$.

\subsection{Compressor Coupling}
Compressors in a network are typically placed between two pipes
with equal surface section and have
to be described by special coupling conditions. The task of a compressor
is to increase the pressure which is permanently decreased through friction.
We consider the resulting compression under adiabatic conditions, i.e.,
zero heat transfer between the gas and the surroundings, and as reversible process in
which the entropy remains constant. This leads to the following
coupling conditions (see also \cite[Chapt. 4.4]{Menon2005}):
\begin{align}
\tag{CM} q_1(0^+,t) + q_2(0^+,t) &= 0, && \text{(mass conservation),}\label{equ:CM}\\
\tag{CP1} \frac{\gamma}{\gamma-1}RT_1(0^+,t) \left(\left(\ds\frac{p_2(0^+,t)}{p_1(0^+,t)}
       \right)^{\frac{\gamma-1}{\gamma}}-1\right) &= H^*(t), && \text{(increase of pressure),} \label{equ:CP1}\\
\tag{CS} s_1(0^+,t) - s_2(0^+,t) &= 0 && \text{(equality of entropy).} \label{equ:CS}
\end{align}
Here, $U^{(1)}\in D_{-}^{\cM_i}$ and $U^{(2)}\in D_{+}^{\cM_j}$, hence
different models for the two pipes are allowed. The
coupling constant $H^*(t)$ stands for the change in adiabatic enthalpy,
necessary to raise the incoming pressure $p_1(0^+,t)$ to the outgoing
pressure $p_2(0^+,t)$. The condition \eqref{equ:CS} can be also expressed in
the form
\begin{equation}
\ds\frac{T_2(0^+,t)}{T_1(0^+,t)} =
\left(\frac{p_2(0^+,t)}{p_1(0^+,t)}\right)^{\frac{\gamma-1}{\gamma}}.
\end{equation}
In optimal control problems, $H^*(t)$ is often replaced by the theoretical
compressor power, $P^*(t)=C_p\,qH^*(t),\;C_p=const.$
\cite{EhrhardtSteinbach2005,Menon2005}, which can
be also used as coupling constant. In this case, we have
\begin{align}
\tag{CP2} \frac{\gamma}{\gamma-1}C_pRq_2(0^+,t)T_1(0^+,t)
\left(\left(\ds\frac{p_2(0^+,t)}{p_1(0^+,t)}
\right)^{\frac{\gamma-1}{\gamma}}-1\right) &= P^*(t). \label{equ:CP2}
\end{align}
Note that $q_2(0^+,t)>0$.

The coupling conditions defined above yield the two coupling functions
\begin{equation}
\Phi_1 \left( U^{(1)}(0^+,t),U^{(2)}(0^+,t) \right) =
\begin{pmatrix}
q_1(0^+,t) + q_2(0^+,t)\\[1mm]
\ds\frac{\gamma R}{\gamma-1}T_1(0^+,t) \left(
\left(\ds\frac{p_2(0^+,t)}{p_1(0^+,t)}
\right)^{\frac{\gamma-1}{\gamma}}-1\right) \\[2mm]
s_1(0^+,t) - s_2(0^+,t)
\end{pmatrix},
\end{equation}
\begin{equation}
\Phi_2 \left( U^{(1)}(0^+,t),U^{(2)}(0^+,t) \right) =
\begin{pmatrix}
q_1(0^+,t) + q_2(0^+,t)\\[1mm]
\ds\frac{\gamma C_pR}{\gamma-1}q_2(0^+,t)T_1(0^+,t)
\left(\left(\ds\frac{p_2(0^+,t)}{p_1(0^+,t)}
\right)^{\frac{\gamma-1}{\gamma}}-1\right)\\[2mm]
s_1(0^+,t) - s_2(0^+,t)
\end{pmatrix}
\end{equation}
and the corresponding $\Pi$-functions in (\ref{euler_ccond}),
\begin{equation}
\Pi_1(t) =
\begin{pmatrix}
0\\
H^*(t)\\
0
\end{pmatrix},\quad
\Pi_2(t) =
\begin{pmatrix}
0\\
P^*(t)\\
0
\end{pmatrix}.
\end{equation}

\section{Generalized Riemann Problems for Model Coupling}
\label{sec:GeneralRiemannProblem}
\subsection{Coupling at Junctions}
In this section we will show that the coupling
conditions \eqref{equ:M}, \eqref{equ:H} and \eqref{equ:S} for the network system
\eqref{network_euler_eqs} -- \eqref{network_euler_init} with~\mbox{$G=0$} are
well-defined. Following the theoretical framework applied in
\cite{ColomboGaravello2006,ColomboMauri2008,LangMindt2018}, we
consider a generalised Riemann problem at a junction connecting
pipes with different gas flow models, and show that there
exists a unique self-similar solution in terms of the classical
Lax solution to standard Riemann problems.

Let us denote by $\Omega_j=\{U^{(j)}\in D_{+}^{m_j}\cup D_{-}^{m_j}\}$
for $j=1,\ldots,N$ and $m_j\in\cM$ nonempty sets
and define the overall state space
$\Omega=\Omega_1\times\Omega_2\times\ldots\times\Omega_N$.

\begin{definition}
\label{def:generalizedRiemannProblem}
The generalized Riemann problem at a junction in $x=0$ with~$N$ adjacent
pipes with different flow models is defined through the set of
equations
\begin{equation}
\label{eq:RPNetwork}
\begin{array}{rll}
\partial_tU^{(j)}+\partial_xF_{m_j}(U^{(j)})&=&
0,\;(x,t)\in\R^+\times\R^+,\;m_j\in\cM,\\[2mm]
\Phi\left(U^{(1)}(0^+,t),...,U^{(N)}(0^+,t)\right)&=&\bPi,\\[2mm]
U^{(j)}(x,0)&=&\bU_0^{(j)},\; x\in\R^+,\;j=1,...,N,
\end{array}
\end{equation}
where the states $\bU_0^{(1)},...,\bU_0^{(N)}$ are constant
states in $\Omega$ and $\bPi\in\R^{d}$ is a constant vector
of dimension $d=N+N_o^{\cM_1}$.
\end{definition}

\begin{definition}
\label{def:selfSimilarSolution}
A $\Phi$-solution to the Riemann problem \eqref{eq:RPNetwork} is a self-similar
function $U(x,t):\R^+\times\R^+\rightarrow\Omega$ for which the following hold:
\begin{enumerate}
\item There exists a constant state $U_\ast(\bU_0)=\lim_{x\rightarrow 0^+}U(x,t)$
such that all components $U^{(j)}(x,t)$ coincide with the restriction to
$x>0$ of the Lax solution to the standard Riemann problem for $x\in\R$,
\begin{equation}
\label{eq:RPClassic}
\begin{array}{rll}
\partial_t U^{(j)} + \partial_x F_{m_j}(U^{(j)}) &=&
0,\;(x,t)\in\R\times\R^+,\;m_j\in\cM,\\[2mm]
U^{(j)}(x,0) &=& \left\{
\begin{array}{ll}
\bU^{(j)}_0 & \mbox{if }x>0,\\[1mm]
U^{(j)}_\ast & \mbox{if }x<0.
\end{array}
\right.
\end{array}
\end{equation}
\item The state $U_\ast$ satisfies $\Phi(U_\ast)=\bPi$
for all $t>0$.
\end{enumerate}
\end{definition}

\noindent {\bf Riemann solution for isentropic Euler equations.}
The solution of the standard Riemann problem \eqref{eq:RPClassic}
for the isentropic models $\cM_2$ and $\cM_3$ with initial data~$(U_L,U_R)$
for $x<0$ and $x>0$, respectively, can be described
by a set of elementary waves such as rarefaction and shock waves.

\begin{figure}[h!]
    \centering
\subfloat[Case R-R]{
    \begin{tikzpicture}
    \coordinate (o) at (0,0);
    \coordinate (AY) at (0,2);
    \coordinate (AX1) at (-2.5,0);
    \coordinate (AX2) at (2.5,0);

    \draw[->,thick] (o) -- (AY); 
    \draw[->,thick] (AX1) -- (AX2); 
    \draw (AY) node [above]{$t$};
    \draw (AX2) node [right]{$x$};
    \draw (-2,1) node {$U_L$};
    \draw (-0.5,1.5) node {$U_{\ast}^{\cM_k}$};
    \draw (2,1) node {$U_{R}$};
    \draw[-,thick] (o)--(-2,2);
    \draw[-,thick] (o)--(-1.75,2);
    \draw[-,thick] (o)--(-1.5,2);
    \begin{scope}[xscale=-1]
    \draw[-,thick] (o)--(-2.5,2);
    \draw[-,thick] (o)--(-2,2);
    \draw[-,thick] (o)--(-1.75,2);
    \end{scope}
        \end{tikzpicture}
} \hfil
\subfloat[Case S-S]{
    \begin{tikzpicture}
    \coordinate (o) at (0,0);
    \coordinate (AY) at (0,2);
    \coordinate (AX1) at (-2.5,0);
    \coordinate (AX2) at (2.5,0);

    \draw[->,thick] (o) -- (AY); 
    \draw[->,thick] (AX1) -- (AX2); 
    \draw (AY) node [above]{$t$};
    \draw (AX2) node [right]{$x$};
    \draw (-2,1) node {$U_L$};
    \draw (-0.5,1.5) node {$U_{\ast}^{\cM_k}$};
    \draw (2,1) node {$U_{R}$};
    \draw[-,very thick] (o)--(2,2);
    \begin{scope}[xscale=-1]
    \draw[-,very thick] (o)--(2,2);
    \end{scope}
    \end{tikzpicture}
}\\[0.5cm]
%
\subfloat[Case R-S]{
    \begin{tikzpicture}
    \coordinate (o) at (0,0);
    \coordinate (AY) at (0,2);
    \coordinate (AX1) at (-2.5,0);
    \coordinate (AX2) at (2.5,0);

    \draw[->,thick] (o) -- (AY); 
    \draw[->,thick] (AX1) -- (AX2); 
    \draw (AY) node [above]{$t$};
    \draw (AX2) node [right]{$x$};
    \draw[-,thick] (o)--(-2,2);
    \draw[-,thick] (o)--(-1.75,2);
    \draw[-,thick] (o)--(-1.5,2);
    \draw[-,very thick] (o)--(2,2);
    \draw (-2,1) node {$U_L$};
    \draw (-0.5,1.5) node {$U_{\ast}^{\cM_k}$};
    \draw (2,1) node {$U_{R}$};
    \end{tikzpicture}
} \hfil
\subfloat[Case S-R]{
    \begin{tikzpicture}
    \coordinate (o) at (0,0);
    \coordinate (AY) at (0,2);
    \coordinate (AX1) at (-2.5,0);
    \coordinate (AX2) at (2.5,0);

    \draw[->,thick] (o) -- (AY); 
    \draw[->,thick] (AX1) -- (AX2); 
    \draw (AY) node [above]{$t$};
    \draw (AX2) node [right]{$x$};
    \draw (-2,1) node {$U_L$};
    \draw (-0.5,1.5) node {$U_{\ast}^{\cM_k}$};
    \draw (2,1) node {$U_{R}$};
    \begin{scope}[xscale=-1]
    \draw[-,thick] (o)--(-2.5,2);
    \draw[-,thick] (o)--(-2,2);
    \draw[-,thick] (o)--(-1.75,2);
    \draw[-,very thick] (o)--(2,2);
    \end{scope}
    \end{tikzpicture}
}
\caption{Possible wave patterns in the solution of Riemann problems
for the isentropic Euler equations: shock (S) and rarefaction (R).}
\label{fig:4riemann-isentropic}
\end{figure}
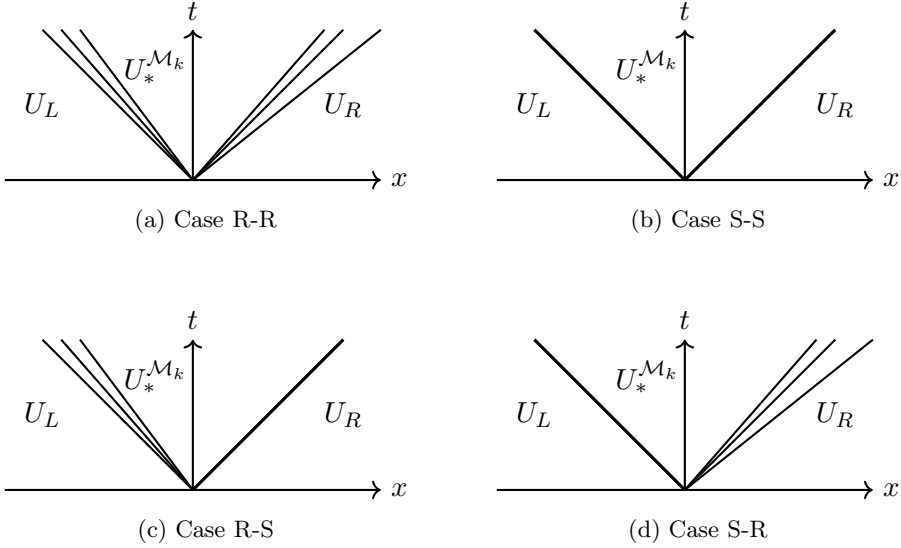


%
These waves are parameterisations of the Rankine-Hugoniot jump condition
and the Riemann invariants \cite{LeVeque2002,Toro2009}.
Due to the construction of the network and the subsonic flow
conditions, only $1$-waves can hit the junction, whereas $2$-waves leave
the junction. These waves separate the solution in three states
$(U_L,U_{\ast}^m,U_R)$, $m\in\{\cM_2,\cM_3\}$, see Fig.~\ref{fig:4riemann-polytropic}.
The components of $U_{\ast}^{m}=(\rho_{\ast}^{m},q_{\ast}^{m})$
are determined by the following equations \cite{ColomboGaravello2008}:
\begin{eqnarray}
&&q_{\ast}^{\cM_2}
 = u_L\rho_{\ast}^{\cM_2}-\theta_{2}(\rho_{\ast}^{\cM_2},U_L)
 = u_R\rho_{\ast}^{\cM_2}+\theta_{2}(\rho_{\ast}^{\cM_2},U_R),\\[2mm]
&&q_{\ast}^{\cM_3}
 = q_L-\theta_{3}(\rho_{\ast}^{\cM_3},U_L)
 = q_R+\theta_{3}(\rho_{\ast}^{\cM_3},U_R),
\end{eqnarray}
where
\begin{eqnarray}
&&\theta_2(\rho_{\ast},\bU)=
\begin{cases}
\ds \frac{2\sqrt{\kappa\gamma}}{\gamma-1}\rho_{\ast}
\left(\rho_{\ast}^{\frac{\gamma-1}{2}}-\bar\rho_{\phantom{a}}^{\frac{\gamma-1}{2}}\right)
&\text{if }\rho_{\ast}\leq\bar\rho\text{ (rarefaction)},\\[0.4cm]
\ds \sqrt{\frac{\rho_{\ast}}{\bar\rho}(\rho_{\ast}-\bar\rho)(p_{\ast}-\bar{p})}&
\text{if }\rho_{\ast}>\bar\rho\text{ (shock)},
\end{cases}\\[2mm]
&&\theta_3(\rho_{\ast},\bU)=
\begin{cases}
\ds \frac{2\sqrt{\kappa\gamma}}{\gamma+1}\left(\rho_{\ast}^{\frac{\gamma+1}{2}}-
\bar\rho_{\phantom{a}}^{\frac{\gamma+1}{2}}\right)&\text{if }\rho_{\ast}
\leq\bar\rho\text{ (rarefaction)},\\[0.4cm]
\ds \sqrt{(\rho_{\ast}-\bar\rho)(p_{\ast}-\bar{p})}&
\text{if }\rho_{\ast}>\bar\rho\text{ (shock)}.\\[4mm]
\end{cases}
\end{eqnarray}
\noindent {\bf Riemann solution for polytropic Euler equations.}
For the polytropic Euler equations, the set of waves is extended
by a contact discontinuity which is located between the other two,
see Fig.~\ref{fig:4riemann-polytropic}.
%
%
%
%
%
%
%
%

\begin{figure}[h!]
    \centering
\subfloat[Case R-C-R]{
    \begin{tikzpicture}
    \coordinate (o) at (0,0);
    \coordinate (AY) at (0,2);
    \coordinate (AX1) at (-2.5,0);
    \coordinate (AX2) at (2.5,0);

    \draw[->,thick] (o) -- (AY); 
    \draw[->,thick] (AX1) -- (AX2); 
    \draw (AY) node [above]{$t$};
    \draw (AX2) node [right]{$x$};
    \draw (-2,1) node {$U_L$};
    \draw (-0.5,1.5) node {$U_{L\ast}$};
    \draw (0.8,1.5) node {$U_{R\ast}$};
    \draw (2,1) node {$U_{R}$};
    \draw[-,thick] (o)--(-2,2);
    \draw[-,thick] (o)--(-1.75,2);
    \draw[-,thick] (o)--(-1.5,2);
    \begin{scope}[xscale=-1]
    \draw[-,thick] (o)--(-2.5,2);
    \draw[-,thick] (o)--(-2,2);
    \draw[-,thick] (o)--(-1.75,2);
    \end{scope}
    \draw[-,dashed] (o)--(0.5,2);
    \end{tikzpicture}
} \hfil
\subfloat[Case S-C-S]{
    \begin{tikzpicture}
    \coordinate (o) at (0,0);
    \coordinate (AY) at (0,2);
    \coordinate (AX1) at (-2.5,0);
    \coordinate (AX2) at (2.5,0);

    \draw[->,thick] (o) -- (AY); 
    \draw[->,thick] (AX1) -- (AX2); 
    \draw (AY) node [above]{$t$};
    \draw (AX2) node [right]{$x$};
    \draw (-2,1) node {$U_L$};
    \draw (-0.5,1.5) node {$U_{L\ast}$};
    \draw (0.9,1.5) node {$U_{R\ast}$};
    \draw (2,1) node {$U_{R}$};
    \draw[very thick] (o)--(2,2);
    \begin{scope}[xscale=-1]
    \draw[-,very thick] (o)--(2,2);
    \end{scope}
    \draw[-,dashed] (o)--(0.5,2);
    \end{tikzpicture}
} \\[0.5cm]
%
\subfloat[Case R-C-S]{
    \begin{tikzpicture}
    \coordinate (o) at (0,0);
    \coordinate (AY) at (0,2);
    \coordinate (AX1) at (-2.5,0);
    \coordinate (AX2) at (2.5,0);

    \draw[->,thick] (o) -- (AY); 
    \draw[->,thick] (AX1) -- (AX2); 
    \draw (AY) node [above]{$t$};
    \draw (AX2) node [right]{$x$};
    \draw[-,thick] (o)--(-2,2);
    \draw[-,thick] (o)--(-1.75,2);
    \draw[-,thick] (o)--(-1.5,2);
    \draw[-,very thick] (o)--(2,2);
    \draw[-,dashed] (o)--(0.5,2);
    \draw (-2,1) node {$U_L$};
    \draw (-0.5,1.5) node {$U_{L\ast}$};
    \draw (1,1.5) node {$U_{R\ast}$};
    \draw (2,1) node {$U_{R}$};
    \end{tikzpicture}
} \hfil
\subfloat[Case S-C-R]{
    \begin{tikzpicture}
    \coordinate (o) at (0,0);
    \coordinate (AY) at (0,2);
    \coordinate (AX1) at (-2.5,0);
    \coordinate (AX2) at (2.5,0);

    \draw[->,thick] (o) -- (AY); 
    \draw[->,thick] (AX1) -- (AX2); 
    \draw (AY) node [above]{$t$};
    \draw (AX2) node [right]{$x$};
    \draw (-2,1) node {$U_L$};
    \draw (-0.5,1.5) node {$U_{L\ast}$};
    \draw (0.8,1.5) node {$U_{R\ast}$};
    \draw (2,1) node {$U_{R}$};
    \begin{scope}[xscale=-1]
    \draw[-,thick] (o)--(-2.5,2);
    \draw[-,thick] (o)--(-2,2);
    \draw[-,thick] (o)--(-1.75,2);
    \draw[-,very thick] (o)--(2,2);
    \end{scope}
    \draw[-,dashed] (o)--(0.5,2);
    \end{tikzpicture}
}
\caption{Possible wave patterns in the solution of Riemann problems
for the polytropic Euler equations: shock (S), contact (C), and rarefaction (R).}
\label{fig:4riemann-polytropic}
\end{figure}
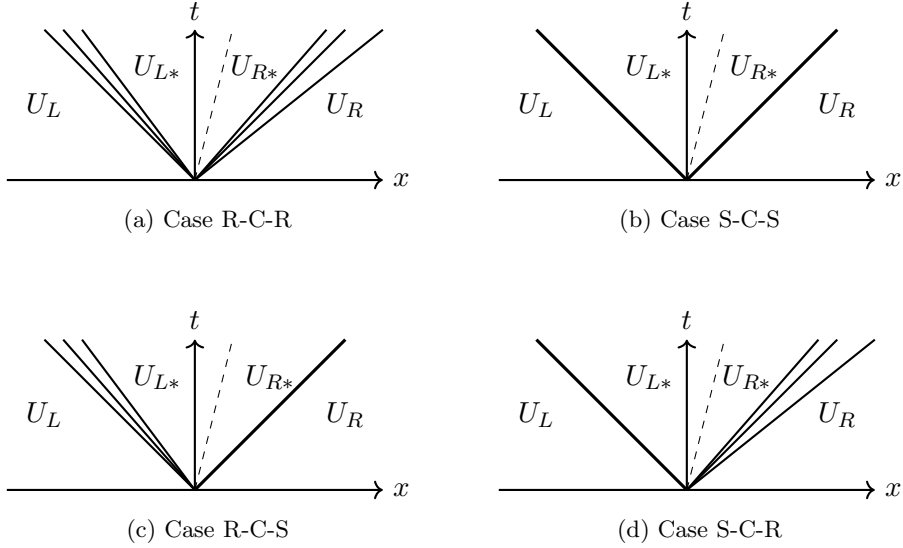


%
Here, $1$-waves enter the
junction, $2$- and $3$-waves leave the junction while separating
the solution in four states $(U_L,U_{L\ast},U_{R\ast},U_R)$. The
velocity and the pressure are constant across the contact
discontinuity, i.e., we have
\begin{equation}
 p_{\ast}=p_{L\ast}=p_{R\ast}\quad\text{and}\quad u_{\ast}=u_{L\ast}=u_{R\ast}
\end{equation}
The four sought variables $(p_{\ast},u_{\ast},\rho_{L\ast},\rho_{R\ast})$ are
again implicitly defined by means of parameterizations
\cite[Chapt. 4]{Toro2009}, \cite[Chapt. 14.11]{LeVeque2002}. It holds
\begin{eqnarray}
\label{eq:M3up-rel}
 u_{\ast}=u_L-\psi(p_{\ast},U_L)=u_R+\psi(p_{\ast},U_R),\\
 \rho_{L\ast}=\phi(p_{\ast},U_L),\quad\rho_{R\ast}=\phi(p_{\ast},U_R),
\end{eqnarray}
where for $k=L,R$
\begin{eqnarray}
\psi(p_\ast,U_k) &=&
\begin{cases}
  \ds \frac{2c_k}{\gamma-1}\left(\left(
    \frac{p_\ast}{p_k}\right)^{\frac{\gamma-1}{2\gamma}}-1\right)
    &\mbox{if }p_{\ast}\leq p_k \mbox{ (rarefaction)},\\[4mm]
  \ds (p_\ast-p_k)\,\left(\frac{1-\mu^2}{\rho_k(p_\ast+\mu^2p_k)}
    \right)^{\frac{1}{2}}
    &\mbox{if }p_{\ast}>p_k \mbox{ (shock)},
\end{cases}\\[2mm]
\phi(p_\ast,U_k) &=&
\begin{cases}
  \ds \rho_k\,\left(\frac{p_\ast}{p_k}\right)^{\frac{1}{\gamma}}
    &\mbox{if }p_{\ast}\leq p_k \mbox{ (rarefaction)},\\[4mm]
  \ds \rho_k\,\frac{p_\ast+\mu^2p_k}{\mu^2p_\ast+p_k}
    &\mbox{if }p_{\ast}>p_k \mbox{ (shock)},
\end{cases}
\end{eqnarray}
with $\mu^2=(\gamma-1)/(\gamma+1)$ and $c_k^2=\gamma p_k/\rho_k$.
The parameter $p_\ast$ is determined from the second equality
in \eqref{eq:M3up-rel}. The functions $\psi(p_\ast,U_k)$ and
$\phi(p_\ast,U_k)$ are twice continuously differentiable at
$p_\ast=p_k$. The total energy for the inner region is derived
from~$E_{k\ast}=p_\ast/(\gamma-1)+\rho_{k\ast}u_\ast^2/2$ for $k=L,R$.
\abstand

\noindent {\bf Lax curves.} By means of the Riemann solutions, we can
set up the parameterisations of the $k$-waves, the so called $k$-Lax
curves. For the isentropic models, $\cM_2$ and $\cM_3$, they are given by
\begin{align}
&\LX_1^{\cM_2}(\sigma,U_L)=
 \begin{pmatrix}
 \sigma\\
 u_L\sigma-\theta_2(\sigma,U_L)
 \end{pmatrix},\;
\LX_2^{\cM_2}(\sigma,U_R)=
 \begin{pmatrix}
 \sigma\\
 u_R\sigma+\theta_2(\sigma,U_R)
 \end{pmatrix},\\[2mm]
&\LX_1^{\cM_3}(\sigma,U_L)=
 \begin{pmatrix}
 \sigma\\
 q_L-\theta_3(\sigma,U_L)
 \end{pmatrix},\quad
\LX_2^{\cM_3}(\sigma,U_R)=
 \begin{pmatrix}
 \sigma\\
 q_R+\theta_3(\sigma,U_R)
 \end{pmatrix}.
\end{align}
The Lax-curves for the model $\cM_1$ read
\begin{align}
\LX_1^{\cM_1}(\sigma,U_L)&=
 \begin{pmatrix}
 \phi(\sigma,U_L)\\[1mm]
 \phi(\sigma,U_L)(u_L-\psi(\sigma,U_L))\\[1mm]
 \frac{\sigma}{\gamma-1}+\frac{1}{2}\phi(\sigma,U_L)(u_L-\psi(\sigma,U_L))^2
 \end{pmatrix},\\[1mm]
\LX_2^{\cM_1}(\tau,\bar U)&=\bar U+\tau\left(1,\bar u,\frac{1}{2}\bar u^2\right)^T,\\[1mm]
\LX_3^{\cM_1}(\sigma,U_R)&=
 \begin{pmatrix}
 \phi(\sigma,U_R)\\[1mm]
 \phi(\sigma,U_R)(u_R+\psi(\sigma,U_R))\\[1mm]
 \frac{\sigma}{\gamma-1}+\frac{1}{2}\phi(\sigma,U_R)(u_R+\psi(\sigma,U_R))^2
 \end{pmatrix}.
\end{align}
\noindent {\bf Coupling conditions for the $\Phi$-solution.}
Since in our network modelling all pipes are only outgoing from
a junction with respect to $x$-coordinate, the sign of the velocity
in incoming pipes (w.r.t. flow direction) has to be changed when
switching from the standard to the generalised Riemann problem. This
changes the parameterisations of all $\LX_1$-curves. Indeed,
$\LX_1^{m}$ has to be replaced by~$\LX_2^{m}$ for~$m=\cM_2,\cM_3$,
and~$\LX_1^{\cM_1}$ by~$\LX_3^{\cM_1}$. We also note that
a contact discontinuity always travels with positive wave speed.
This is a consequence of constant initial data, the spatial
parametrisation of the pipes and the restriction to subsonic flow.

%
%
%
%
%
%
%

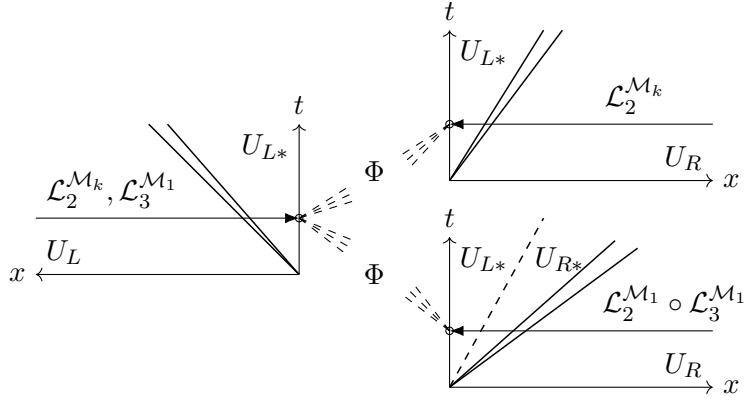
\begin{figure}[h!]
    \centering
  \begin{tikzpicture}
    \coordinate (o1) at (0,0);
    \coordinate (AX1) at (-3.5,0);
    \coordinate (AY1) at (0,2);
    \coordinate (C11) at (-3.5,0.75);
    \coordinate (C12) at (0,0.75);

    \coordinate (o2) at (2,1.25);
    \coordinate (AX2) at (5.5,1.25);
    \coordinate (AY2) at (2,3.25);
    \coordinate (C21) at (2,2);
    \coordinate (C22) at (5.5,2);
    \coordinate (C23) at (5,2);

    \coordinate (o3) at (2,-1.5);
    \coordinate (AX3) at (5.5,-1.5);
    \coordinate (AY3) at (2,0.5);
    \coordinate (C31) at (2,-0.75);
    \coordinate (C32) at (5.5,-0.75);
    \coordinate (C33) at (5,-0.75);

    \draw[<-] (AX1)--(o1);
    \draw[<-] (AY1)--(o1);
    \draw[<-] (AX2)--(o2);
    \draw[<-] (AY2)--(o2);
    \draw[<-] (AX3)--(o3);
    \draw[<-] (AY3)--(o3);
    \draw[-,line width=0.2mm] (o1)--(-2,2);
    \draw[-,line width=0.2mm] (o1)--(-1.75,2);

    \draw[-,line width=0.2mm] (o2)--(3.25,3.25);
    \draw[-,line width=0.2mm] (o2)--(3.5,3.25);

    \draw[-,line width=0.2mm] (o3)--(4.2,0.45);
    \draw[-,line width=0.2mm] (o3)--(4.5,0.35);
    \draw[dashed,line width=0.2mm] (o3)--(3.25,0.75);
    \draw[-{Latex[scale=1.1]}] (C11)--(C12);
    \draw[-{Latex[scale=1.1]}] (C22)--(C21);
    \draw[-{Latex[scale=1.1]}] (C32)--(C31);
    \draw[above right] (AX1) node {$U_L$};
    \draw[below left] (AY1) node {$U_{L\ast}$};
    \draw[left] (AX1) node {$x$};
    \draw[above] (AY1) node {$t$};

    \draw[above left] (AX2) node {$U_R$};
    \draw[below right] (AY2) node {$U_{L\ast}$};
    \draw[right] (AX2) node {$x$};
    \draw[above] (AY2) node {$t$};

    \draw[above left] (AX3) node {$U_R$};
    \draw[below right] (AY3) node {$U_{L\ast}$};
    \draw[below right] (3,0.5) node {$U_{R\ast}$};
    \draw[right] (AX3) node {$x$};
    \draw[above] (AY3) node {$t$};

    \draw[above right] (C11) node {$\LX_2^{\cM_k},\LX_3^{\cM_1}$};
    \draw[above left] (C23) node {$\LX_2^{\cM_k}$};
    \draw[above] (C33) node {$\LX_2^{\cM_1}\circ\LX_3^{\cM_1}$};

    \node[circle,draw,inner sep=1pt,] at (C12){};
    \node[circle,draw,inner sep=1pt,] at (C21){};
    \node[circle,draw,inner sep=1pt,] at (C31){};

%
%


	\node [] at ($(C12)!0.5!(C21)$) {$\Phi$};
		\node [] at ($(C12)!0.5!(C31)$) {$\Phi$};

		\draw[rotate around={17.5:(C12)},dashed] (C12) --(0.75,0.75);
			\draw[rotate around={25:(C12)},dashed] (C12) --(0.75,0.75);
				\draw[rotate around={32.5:(C12)},dashed] (C12) --(0.75,0.75); %
			
		\draw[rotate around={32.5:(C21)},dashed] (C21) --(1.25,2); %
			\draw[rotate around={40:(C21)},dashed] (C21) --(1.25,2);
				\draw[rotate around={47.5:(C21)},dashed] (C21) --(1.25,2);
				
		\draw[rotate around={-22.5:(C12)},dashed] (C12) --(0.75,0.75);
			\draw[rotate around={-30:(C12)},dashed] (C12) --(0.75,0.75);
				\draw[rotate around={-37.5:(C12)},dashed] (C12) --(0.75,0.75);
			
		\draw[rotate around={-37.5:(C31)},dashed] (C31) --(1.25,-0.75);
			\draw[rotate around={-45:(C31)},dashed] (C31) --(1.25,-0.75);
				\draw[rotate around={-52.5:(C31)},dashed] (C31) --(1.25,-0.75);
	
  \end{tikzpicture}
  \caption{Schematic presentation of the coupling with Lax-curves. The
Lax-curve $\LX_2^{\cM_k}$ is used for $k=2,3$.}
\label{fig:4riemann-coupling}
\end{figure}


The coupling conditions in \eqref{eq:RPNetwork} can now be expressed
in terms of the Lax-curves, see Fig.~\ref{fig:4riemann-coupling} for a schematic
illustration. Let us set $U_{\ast}=U_{L\ast}$ for $\cM_1$.
Then, the sought state $U_\ast$ in Def.~\ref{def:selfSimilarSolution}
satisfies
\begin{equation}
\Phi\left( U^{(1)}_\ast,\ldots,U^{(N)}_\ast\right) = \Pi\in\R^d
\end{equation}
with
\begin{align}
U^{(j)}_{\ast} &= \LX_3^{\cM_1}\left(\sigma_j,\bar U^{(j)}\right),\;
j\in\I_i^{\cM_1},\\
U^{(j)}_{\ast} &= \LX_2^{\cM_1}\left(\tau_j,
\LX_3^{\cM_1}\left(\sigma_j,\bar U^{(j)}\right)\right),\;
j\in\I_o^{\cM_1},\\
U^{(j)}_{\ast} &= \LX_2^{\cM_k}\left(\sigma_j,\bar U^{(j)}\right),\;
j\in\I_i^{\cM_k}\cup\I_o^{\cM_k},\;k=2,3.
\end{align}
Given
constant states $\bU^{(j)}$, mass flux, enthalpy and entropy
can be extracted from~$U^{(j)}_\ast$ using the Lax curves:
\begin{equation}
\label{eq:qhs}
\begin{array}{rlll}
f_j(\sigma_j,\tau_j) &=& f_j(\LX_2^{\cM_1}(
\tau_j,\LX_3^{\cM_1}(\sigma_j,\bU^{(j)}))),
  & j\in\I_o^{\cM_1}, \\[2mm]
f_j(\sigma_j) &=& f_j(\LX_3^{\cM_1}(\sigma_j,\bU^{(j)})),
  & j\in\I_i^{\cM_1},\\[2mm]
f_j(\sigma_j) &=& f_j(\LX_2^{\cM_k}(\sigma_j,\bU^{(j)})),
  & j\in\I_o^{\cM_k}\cup\I_i^{\cM_k},\;k=2,3,
\end{array}
\end{equation}
with $f_j\in\{q_j,h_j,s_j\}$, where $h_j$ and $s_j$ depend
on the model chosen and are given by the expressions
\begin{align}
h_j =&
\begin{cases}
\ds\frac{E_j+p_j}{\rho_j},
  & j\in\I_o^{\cM_1}\cup\I_i^{\cM_1},\\[2mm]
\ds\frac{\kappa_j\gamma}{\gamma-1}\rho_j^{\gamma-1} + \frac{u_j^2}{2},
  & j\in\I_o^{\cM_2}\cup\I_i^{\cM_2},\\[4mm]
\ds\frac{\kappa_j\gamma}{\gamma-1}\rho_j^{\gamma-1},
  & j\in\I_o^{\cM_3}\cup\I_i^{\cM_3},
\end{cases}\\[2mm]
s_j =&
\begin{cases}
\ds c_v\ln \left(\frac{p_j}{\rho_j^\gamma} \right) + s_0,
  & j\in\I_i^{\cM_1}\cup\I_o^{\cM_1},\\[2mm]
\ds s_j, &  j\in\I_i^{\cM_2}\cup\I_i^{\cM_3}.
\end{cases}
\end{align}
Without loss of generality, let $\I_o^{\cM_1}=\{1,2,\ldots,n_0\}$,
$\I_i^{\cM_1}=\{n_0+1,\ldots,n_1\}$, $\I_i^{\cM_2}=\{n_1+1,\ldots,n_2\}$,
and $\I_i^{\cM_3}=\{n_2+1,\ldots,n_3\}$. Accordingly,
the free parameters are $(\sigma_1,\ldots,\sigma_N)$ and
$(\tau_1,\ldots,\tau_{n_0})$.
The coupling conditions \eqref{equ:M}, \eqref{equ:H}, and \eqref{equ:S} can now be
written as
\begin{equation}
\label{eq_coupling}
0 = \Phi(\sigma,\tau) =
\begin{pmatrix}
  \sum_{j=1,\ldots,n_0}\|\nu_j\|\,q_j(\sigma_j,\tau_j)+
  \sum_{j=n_0+1,\ldots,N}\|\nu_j\|\,q_j(\sigma_j)\\[2mm]
  h_{n_0+1}(\sigma_{n_0+1})-h_1(\sigma_{1},\tau_1)\\
  \vdots\\
  h_{n_0+1}(\sigma_{n_0+1})-h_{n_0}(\sigma_{n_0},\tau_{n_0})\\[1mm]
  h_{n_0+1}(\sigma_{n_0+1})-h_{n_0+2}(\sigma_{n_0+2})\\
  \vdots\\
  h_{n_0+1}(\sigma_{n_0+1})-h_{N}(\sigma_{N})\\[1mm]
  s_1(\sigma_1,\tau_1)-s^\ast(\sigma_{n_0+1},\ldots,\sigma_{n_3})\\
  \vdots\\
  s_{n_0}(\sigma_{n_0},\tau_{n_0})-s^\ast(\sigma_{n_0+1},\ldots,\sigma_{n_3})
\end{pmatrix}
\end{equation}
with $s^\ast$ defined through
\begin{equation}
s^\ast(\sigma_{n_0+1},\ldots,\sigma_{n_3}) = \frac{1}{\sum_{j=n_0+1,
\ldots,n_3}\,\|\nu_j\|q_j(\sigma_j)}
\,\sum_{j=n_0+1,\ldots,n_3}\,\|\nu_j\|(q_js_j)(\sigma_j).
\end{equation}
We set $s_j=s^\ast$ for $j\in\I_o^{\cM_2}\cup\I_o^{\cM_3}$, which is
always possible, if $s^\ast$ is well-defined by the
coupling conditions \eqref{eq_coupling}.
The regularity of the Lax curves ensures the property~$\Phi\in C^1(\R^N\times\R^{n_0},\R^d)$.
It remains to show that~\eqref{eq_coupling} has a unique solution. Newton's method
can then be applied to determine the solution vector
$(\sigma^\ast,\tau^\ast)$, which determines the states
$U_\ast^{(j)}$. For the well-posedness of the generalised
Riemann problem~\eqref{eq:RPNetwork} with the coupling
function $\Phi$ defined in~\eqref{eq_coupling}, the following
local result as a generalisation of \cite[Theorem 2.1]{LangMindt2018}
can be given.
\begin{theorem}
\label{th_riemann_junc}
Let $N>\dim(\I_i)>0$ and $\Phi$ as defined in \eqref{eq_coupling}.
Assume constant initial data
$\bU^{(j)}\in D_-^m,\;j\in\I_i^m$, and
$\bU^{(j)}\in D_+^m,\;j\in\I_o^m$, $m\in\{\cM_1,\cM_2,\cM_3\}$,
with~$\Phi(\bU)\!=\!0$ are given. Then there exist positive
constants $\delta$ and $K$ such that for all initial states
$\tU\in\Omega$ with
$\sum_{j=1,\ldots,N}\|\tU^{(j)}-\bU^{(j)}\|\!<\!\delta$,
the Riemann problem~\eqref{eq:RPNetwork} admits
a unique $\Phi$-solution $U(x,t)=\cR^{\Phi}(\tU)$ satisfying
$\Phi(U(0^+,t))\!=\!0$ and
\begin{equation}
\label{grm_lip_init}
\|\cR^{\Phi}(\tU)-\cR^{\Phi}(\bU)\|_{{\bf L}^\infty(\Omega)} \le K\,
\sum_{i=1}^N\|\tilde{U}^{(i)}-\bU^{(i)}\|.
\end{equation}
Additionally, if $\nu$ is replaced by $\hat{\nu}$, where
$\sum_{i=1,\ldots,N}\|\nu_i-\hat{\nu}_i\|\!<\!\delta$, and
$\cR^{\Phi}_{\hat{\nu}}(\tU)$ is the corresponding $\Phi$-solution
for the same initial state $\tU$, then
\begin{equation}
\label{grm_lip_pipe}
\|\cR^{\Phi}_\nu(\tU)-\cR^{\Phi}_{\hat{\nu}}(\tU)\|_{{\bf L}^\infty(\Omega)}
\le K\,\sum_{i=1}^N\|\nu_i-\hat{\nu}_i\|
\end{equation}
with $\cR^{\Phi}_\nu(\tU)\!=\!\cR^{\Phi}(\tU)$.
\end{theorem}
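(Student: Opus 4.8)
The plan is to recognize the $\Phi$-solution as the zero set of the $C^1$ map $\Phi(\sigma,\tau)$ from \eqref{eq_coupling} and to apply the Implicit Function Theorem. The number of equations equals the number of free parameters, $d=N+N_o^{\cM_1}=N+n_0$, and the hypotheses $N>\dim(\I_i)>0$ guarantee a nonempty incoming set (so that $s^\ast$ and its normalizing incoming mass flux are well-defined near the base state) and at least one outgoing pipe. The reference parameters — all leaving acoustic strengths set at the base pressures ($\cM_1$) respectively densities ($\cM_2,\cM_3$), and all contact strengths $\tau_j=0$ — reproduce the states $\bU^{(j)}$ and therefore satisfy $\Phi=0$ by assumption. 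Since $\Phi$ is already known to be $C^1$, it suffices to show that the Jacobian $\partial\Phi/\partial(\sigma,\tau)$ is nonsingular at this reference point; the IFT then yields, for every $\tU$ with $\sum_j\|\tU^{(j)}-\bU^{(j)}\|<\delta$, a unique $(\sigma^\ast,\tau^\ast)$ depending $C^1$ on the base data $\tU^{(j)}$ (which enter through the Lax curves) and on $\nu$ (which enters only through $\|\nu_j\|$ in mass conservation and in $s^\ast$), hence the unique states $U_\ast^{(j)}$ and the unique $\Phi$-solution $\cR^{\Phi}(\tU)$ of Def.~\ref{def:selfSimilarSolution}.

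A $C^1$ dependence is locally Lipschitz, so $(\sigma^\ast,\tau^\ast)$ is Lipschitz in both $\tU$ and $\nu$. The finitely many constant states and the rarefaction profiles that compose the self-similar function $\cR^{\Phi}$ are themselves $C^1$ images of $(\sigma^\ast,\tau^\ast)$ and of the base data under the Lax curves; consequently the supremum in $(x,t)$ of the difference of two such solutions is controlled by the difference of their defining parameters, which gives \eqref{grm_lip_init} with a single constant $K$. Running the same IFT with $\nu$ as the varying argument for fixed $\tU$, and using that $\nu$ enters $\Phi$ linearly through $\|\nu_j\|$, yields \eqref{grm_lip_pipe} by the identical estimate.

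The crux is the nonsingularity of the Jacobian. Differentiating the Lax curves at zero strength returns the right eigenvectors: $\partial_\sigma$ of a leaving acoustic curve is a positive multiple of the corresponding right eigenvector ($r_3$ for $\cM_1$, the $2$-eigenvector for $\cM_2,\cM_3$), while $\partial_\tau\LX_2^{\cM_1}=(1,\bar u,\tfrac12\bar u^2)^T$ is the contact eigenvector $r_2$. Two structural facts organize the matrix. First, the specific entropy is a Riemann invariant of the genuinely nonlinear fields (and is constant for the isentropic models), so its derivative along every acoustic curve vanishes at the reference point, $\partial s_j/\partial\sigma_j=0$. Second, entropy jumps across the contact, $\partial s_j/\partial\tau_j=\nabla s\cdot r_2=-c_v\gamma/\bar\rho_j\neq0$. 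Since $s^\ast$ depends only on the incoming $\sigma$'s, the entropy rows restricted to the $\tau$-columns form a diagonal, invertible block; eliminating the $n_0$ contact parameters through the entropy equations (a Schur complement) reduces everything to an $N\times N$ system in $\sigma$, consisting of mass conservation and the $N-1$ enthalpy differences anchored at the pivot $j_0=n_0+1$.

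For this reduced system one clean identity does the bookkeeping: a direct computation along each acoustic curve gives, for all three models, $(\partial q_j/\partial\sigma_j)/(\partial h_j/\partial\sigma_j)=\bar\rho_j/\bar c_j$. The $N-1$ enthalpy differences then force every $\sigma$-perturbation to be proportional to the pivot perturbation $\delta\sigma_{n_0+1}$, and substitution into mass conservation collapses the determinant to the strictly positive sum $\sum_j\|\nu_j\|\bar\rho_j/\bar c_j$ plus a correction generated by the contacts and by the sensitivity of the mix through $\partial s^\ast/\partial\sigma_l=\|\nu_l\|(\partial q_l/\partial\sigma_l)(\bar s_l-\bar s^{\ast})\big/\sum_{k\in\I_i}\|\nu_k\|\bar q_k$. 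The \emph{main obstacle} is exactly to show that this scalar cannot vanish, i.e.\ that the positive leading term dominates the entropy-mixing correction; this is where the subsonic restriction $|\bar u_j|<\bar c_j$ — guaranteeing $\bar u_j+\bar c_j>0$, the sign-definiteness of $\partial q/\partial\sigma$ and $\partial h/\partial\sigma$, and the nonvanishing of the incoming mass flux $\sum\|\nu_l\|\bar q_l$ normalizing $s^\ast$ — must be used decisively, precisely as in the purely polytropic case of \cite[Theorem 2.1]{LangMindt2018}. Once this nonvanishing is secured, the IFT applies, Newton's method converges to $(\sigma^\ast,\tau^\ast)$, and the stated local existence, uniqueness and Lipschitz bounds follow.
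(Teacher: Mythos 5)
Your overall strategy is the paper's: view the $\Phi$-solution as a zero of the $C^1$ map \eqref{eq_coupling}, check that the reference parameters reproduce $\bU$, prove nonsingularity of the Jacobian there, and obtain existence, uniqueness and both Lipschitz bounds from the Implicit Function Theorem (with $\nu$ entering smoothly through $\|\nu_j\|$). Your Schur-complement elimination of the contact strengths $\tau_j$ via the entropy rows, together with the ratio $\hat q_{\sigma_j}/h_{\sigma_j}=\|\nu_j\|\bar\rho_j/\bar c_j$, is a legitimate reorganization of the linear algebra that the paper instead packages as the reduction to the $3\times 3$ blocks $D_j$; your treatment of the case $n_0=0$ matches the paper's.

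There is, however, a genuine gap at exactly the point you label ``the main obstacle'': you assert that the reduced scalar cannot vanish but do not prove it, and the route you sketch --- showing that the positive leading term $\sum_j\|\nu_j\|\bar\rho_j/\bar c_j$ \emph{dominates} the entropy-mixing correction --- would not close. The correction carries factors $\bar s_l-\bar s^{\ast}$ of indefinite sign and arbitrary magnitude, so no smallness or dominance estimate is available, and the subsonic restriction alone does not fix its sign. The paper resolves this not by an estimate but by a sign argument that exploits a degree of freedom you never invoke: the pivot $j_0$ of the enthalpy differences is \emph{chosen} to be the incoming pipe of maximal entropy, $\bar s_{n_0+1}=\max_{j\in\I_i}\bar s_j$. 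Since $\bar s^{\ast}$ is a convex combination of the incoming entropies (weights $\|\nu_j\|\bar q_j/\sum_{k\in\I_i}\|\nu_k\|\bar q_k$ with $\bar q_j<0$), this forces $\bar s_{n_0+1}-\bar s^{\ast}\ge 0$ and hence $s^{\ast}_{\sigma_{n_0+1}}\le 0$. With that, every term in
\[
\det(D_j)=\hat q_{\sigma_j}\bigl(h_{\sigma_{n_0+1}}s_{\tau_j}-h_{\tau_j}s^{\ast}_{\sigma_{n_0+1}}\bigr)
+h_{\sigma_j}\bigl(\hat q_{\sigma_{n_0+1}}s_{\tau_j}+\hat q_{\tau_j}s^{\ast}_{\sigma_{n_0+1}}\bigr)
\]
is nonpositive and the total strictly negative, using $s_{\tau_j}<0$, $h_{\tau_j}<0$, $\hat q_{\tau_j}>0$ and the positivity of the acoustic derivatives in the subsonic regime; no cancellation can occur. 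Without this normalization of $j_0$ the entropy-mix term can take either sign and your argument, as written, does not establish the nonvanishing of the determinant.
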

%
\begin{proof}
In the spirit of the implicit function theorem,
it is sufficient to study the determinant of the Jacobian matrix
$D_{(\sigma,\tau)}\Phi(\sigma_0,\tau_0)$ with the two argument vectors
$\sigma_0=(\bp_1,\ldots,\bp_{n_1},\brho_{n_1+1},\ldots,\brho_{n_3})$
and~$\tau_0=0\in\R^{n_0}$. Note that $\Phi(\sigma_0,\tau_0)=0$.
The Jacobian reads
\begin{equation}
\label{cp_jac}
\left(
\begin{array}{ccc|cccc|ccc}
        \hat{q}_{\sigma_1} &
            \!\!\cdots\!\! &
    \hat{q}_{\sigma_{n_0}} &
  \hat{q}_{\sigma_{n_0+1}} &
  \hat{q}_{\sigma_{n_0+2}} &
            \!\!\cdots\!\! &
      \hat{q}_{\sigma_{N}} &
          \hat{q}_{\tau_1} &
            \!\!\cdots\!\! &
        \hat{q}_{\tau_{n_0}}\\[1mm] \hline
       -h_{\sigma_1} & & &
  h_{\sigma_{n_0+1}} & & & &
         -h_{\tau_1} & &\\
 & \!\!\ddots\!\! & & \vdots & & & & & \!\!\ddots\!\! &\\
                     & &
   -h_{\sigma_{n_0}} &
  h_{\sigma_{n_0+1}} & & & & & &
       -h_{\tau_{n_0}}\\[1mm] \hline
                     & & &
  h_{\sigma_{n_0+1}} &
 -h_{\sigma_{n_0+2}} & & & & &\\
  & & & \vdots & & \!\!\ddots\!\! & & \\
                     & & &
  h_{\sigma_{n_0+1}} & & &
     -h_{\sigma_{N}} & & &\\[1mm] \hline
        s_{\sigma_1} & & &
  -s^\ast_{\sigma_{n_0+1}} &
  -s^\ast_{\sigma_{n_0+2}} &
            \!\!\cdots\!\! &
  -s^\ast_{\sigma_{N}} &
            s_{\tau_1} & &\\
 & \!\!\ddots\!\! & & \vdots & & & \vdots & & \!\!\ddots\!\! &\\
                      & &
          s_{\sigma_{n_0}} &
  -s^\ast_{\sigma_{n_0+1}} &
  -s^\ast_{\sigma_{n_0+2}} &
            \!\!\cdots\!\! &
      -s^\ast_{\sigma_{N}} & & &
            s_{\tau_{n_o}}
\end{array}
\right)
\end{equation}
Here, we have used the notations $f_{\mu_i}=\partial_{\mu_i}f_{i}$,
$\hat{q}_{\mu_i}=\|\nu_i\|\partial_{\mu_i}q_{i}$
for $f\!=\!h,s$ and~$\mu=\sigma,\tau$, and
$s^\ast_{\sigma_{i}}=\partial_{\sigma_{i}}s^\ast$. Note that
$s^\ast_{\sigma_j}=0$ for $j>n_3$. From~\eqref{eq:qhs}, we derive the
following derivatives:
\begin{align}
&\partial_{\sigma_j}q_j(\bp_j,0)=\frac{\lambda_3^{\cM_1}}{\bc_j^2},\;
\partial_{\sigma_j}h_j(\bp_j,0)=\frac{\lambda_3^{\cM_1}}{\bc_j\brho_j},\;
\partial_{\sigma_j}s_j(\bp_j,0)=0,\; j\in\I_o^{\cM_1},\\
&\partial_{\tau_j}q_j(\bp_j,0)=\lambda_2^{\cM_1},\;
\partial_{\tau_j}\!h_j(\bp_j,0)=-\frac{\bc_j^2}{(\gamma\!-\!1)\brho_j},\;
\partial_{\tau_j}\!s_j(\bp_j,0)=-\frac{\gamma c_v}{\brho_j},\; j\in\I_o^{\cM_1},\\
&q_j'(\bp_j)=\frac{\lambda_3^{\cM_1}}{\bc_j^2},\;
h_j'(\bp_j)=\frac{\lambda_3^{\cM_1}}{\bc_j\brho_j},\;
\partial_{\sigma_j}s^\ast(\bp,\brho)=
\frac{\|\nu_j\|\lambda_3^{\cM_1}(\bs_j-\bs^\ast)}{\bc^2_j\sum_{j\in\I_i}\|\nu_j\|\bq_j},\;
j\in\I_i^{\cM_1},\\[1mm]
&q_j'(\brho_j)=\lambda_2^{\cM_k},\;h_j'(\brho_j)=\frac{\lambda_2^{\cM_k}\bc_j}{\brho_j},\;
j\in\I_i^{\cM_k}\cup\I_o^{\cM_k},\;k=2,3,\\
&\partial_{\sigma_j}s^\ast(\bp,\brho)=
\frac{\|\nu_j\|\lambda_2^{\cM_k}(\bs_j-\bs^\ast)}{\sum_{j\in\I_i}\|\nu_j\|\bq_j},\;
\; j\in\I_i^{\cM_k},\;k=2,3,
\end{align}
where $\bc_j=\sqrt{\gamma\bp_j/\brho_j}$ for $\cM_1$ and
$\bc_j=\sqrt{\kappa_j\gamma\brho_j^{\gamma-1}}$ for $\cM_2,\cM_3$.
None of the derivatives can vanish, except $\partial_{\sigma_j}s_j$ and
$\partial_{\sigma_j}s^\ast$. Without loss of generality, we number the incoming
pipes in such a way that $\bs_{n_0+1}=\max_{j\in\I_i}\bs_j$.
Then $\bs_{n_0+1}-\bs^\ast\ge 0$, and therefore $s^\ast_{\sigma_{n_0+1}}\le 0$
since $\bq_j<0$ for $j\in\I_i$. Note that $\dim(\I_i)>0$ has
been assumed.
\abstand

\noindent Case 1: $n_0>0$.\\
A closer inspection of the special structure of the Jacobian~\eqref{cp_jac}
reveals that it is regular if and only if all $3\times 3$--matrices
\begin{equation}
D_j =
\begin{pmatrix}
 \hat{q}_{\sigma_j} & \hat{q}_{\sigma_{n_0+1}} & \hat{q}_{\tau_j}\\[1mm]
      -h_{\sigma_j} &       h_{\sigma_{n_0+1}} &      -h_{\tau_j}\\[1mm]
                  0 & -s^\ast_{\sigma_{n_0+1}} & s_{\tau_j}
\end{pmatrix}
\quad\text{ for } j=1,\ldots,n_0,
\end{equation}
are regular. Taking into account the sign of the derivatives, we find
\begin{equation}
\det(D_j) = \hat{q}_{\sigma_j} (h_{\sigma_{n_0+1}}s_{\tau_j}-h_{\tau_j}s^\ast_{\sigma_{n_0+1}})
 + h_{\sigma_j} (\hat{q}_{\sigma_{n_0+1}}s_{\tau_j}+\hat{q}_{\tau_j}s^\ast_{\sigma_{n_0+1}})
 <0.\\[2mm]
\end{equation}
\noindent Case 2: $n_0=0$.\\
In this case, model $\cM_1$ does not appear for outgoing pipes. Hence,
the entropy mix~$s^\ast$ is simply passed as constant entropy value to
the lower order models applied in the outflow region. The parameters
$\tau_0$ disappear and the Jacobian matrix has the simplified form
\begin{equation}
\label{cp_jac_simple}
D_{\sigma}\Phi(\sigma_0) =
\left(
\begin{array}{cccc}
\hat{q}_{\sigma_1} & \hat{q}_{\sigma_2} & \cdots & \hat{q}_{\sigma_N} \\
h_{\sigma_1} & -h_{\sigma_2} && \\
\vdots & & \ddots & \\
h_{\sigma_1} & & & -h_{\sigma_N}
\end{array}
\right).
\end{equation}
We first note that $\hat{q}_{\sigma_j},h_{\sigma_j}\!>\!0$ for all
$j=1,\ldots,N$. Consequently, the column vectors are linearly dependent if
and only if the first vector can be written as the sum of the others. It would
request that $\hat{q}_{\sigma_1} = \alpha_2\hat{q}_{\sigma_2}+\ldots+
\alpha_N\hat{q}_{\sigma_N}$ with $\alpha_j=-h_{\sigma_1}/h_{\sigma_j}$, which
contradicts $\hat{q}_{\sigma_1}>0$. This shows
$\det(D_{\sigma}\Phi(\sigma_0))\ne 0$.

Now, the implicit function theorem ensures the existence of a
$\delta\!>\!0$, a neighbourhood $\cU(v_0)$  of $v_0=(\sigma_0,\tau_0)$
in case 1 or $v_0=\sigma_0$ in case 2, and a function
$\varphi:B(\bar{U},\delta)\rightarrow \cU(v_0)$ such that $\varphi(\bar{U})=v_0$
and $\Phi(v;U)=0$ if and only if $v=\varphi(U)$ for all $U\in B(\bar{U},\delta)$.
The solution $U(x,t)$ can then be identified by the restriction to $x\in\R^+$ of
the solution to the standard Riemann problem~\eqref{eq:RPClassic} with $\bU_0\!=\!\tU$.
The Lipschitz estimate~\eqref{grm_lip_init} follows from the $C^1$-regularity of $\Phi$.
Since $\Phi$ depends smoothly on $\|\nu_i\|$, the same arguments as above can be used to
show~\eqref{grm_lip_pipe}.
\end{proof}

\subsection{Compressor Coupling}
We will now show that the compressor couplings, i.e., the coupling conditions \eqref{equ:CM} and \eqref{equ:CS} accomplished with either \eqref{equ:CP1} or \eqref{equ:CP2}, are also well-defined. The generalized
Riemann problem for the compressor and its self-similar $\Phi$-solution can be formulated
using Definition~\ref{def:selfSimilarSolution} with $N\!=\!2$ and $\Phi$ given by the
corresponding compressor model.\\
We have the following theorem:
\begin{theorem}
\label{th_riemann_comp}
Let $N\!=\!2$, $\|\nu_1\|\!=\!\|\nu_2\|$ and $\Phi$ defined through
\eqref{equ:CM}, \eqref{equ:CS}, and either \eqref{equ:CP1}
or \eqref{equ:CP2}. Assume constant initial data
$\bU^{(1)}\in D_-^i$ and $\bU^{(2)}\in D_+^j$ with
$i,j\in\{\cM_1,\cM_2,\cM_3\}$ and
$\Phi(\bU)\!=\!\bar{\Pi} \!=\!(0,\bar{\Pi}_2,0)^T$ with $\bar{\Pi}_2>0$ are given. Then
there exist positive constants $\delta$ and $K$ such that for all
states $\tU\in\Omega$ with
$\|\tU^{(1)}-\bU^{(1)}\|+\|\tU^{(2)}-\bU^{(2)}\|\!<\!\delta$,
the Riemann problem~\eqref{eq:RPNetwork} admits
a unique $\Phi$-solution $U(x,t)=\cR^{\Phi}(\tU)$ satisfying
$\Phi(U(0^+,t))\!=\!\bar{\Pi}$ and
\begin{equation}
\|\cR^{\Phi}(\tU)-\cR^{\Phi}(\bU)\|_{{\bf L}^\infty(\Omega)} \le K\,
\left(\|\tilde{U}^{(1)}-\bU^{(1)}\|+\|\tilde{U}^{(2)}-\bU^{(2)}\|\right).
\end{equation}
\end{theorem}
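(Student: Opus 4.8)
The plan is to reproduce the implicit-function-theorem argument of Theorem~\ref{th_riemann_junc} for the case $N=2$, with the compressor map in place of~\eqref{eq_coupling}. I would parameterise the two traces by the same Lax curves as before: the incoming pipe~$1$ by $U^{(1)}_\ast=\LX_3^{\cM_1}(\sigma_1,\bU^{(1)})$ if $i=\cM_1$ and by $U^{(1)}_\ast=\LX_2^{\cM_i}(\sigma_1,\bU^{(1)})$ if $i\in\{\cM_2,\cM_3\}$, and the outgoing pipe~$2$ by $U^{(2)}_\ast=\LX_2^{\cM_1}(\tau_2,\LX_3^{\cM_1}(\sigma_2,\bU^{(2)}))$ if $j=\cM_1$ and by $U^{(2)}_\ast=\LX_2^{\cM_j}(\sigma_2,\bU^{(2)})$ otherwise. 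Writing the conditions \eqref{equ:CM}, \eqref{equ:CS} together with \eqref{equ:CP1} (respectively \eqref{equ:CP2}) as a map $\Phi(\sigma_1,\sigma_2,\tau_2)$ on a neighbourhood of the base point $\sigma_0=(\bp_1,\bp_2)$ (densities for isentropic pipes) and $\tau_2=0$, the hypothesis $\Phi(\bU)=\bar\Pi$ gives $\Phi(\sigma_0,0)=\bar\Pi$. The Lax curves are $C^1$ (indeed $C^2$ across $p_\ast=p_k$), so $\Phi$ is $C^1$, and everything reduces to showing that $D\Phi$ is nonsingular at the base point.

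Consider first the dimensionally balanced configuration $j=\cM_1$, which has three parameters and three equations. All derivatives of $q_1,q_2,s_1,s_2$ with respect to $\sigma_1,\sigma_2,\tau_2$ are those already recorded in the proof of Theorem~\ref{th_riemann_junc}: they give $\partial_{\sigma_1}q_1>0$ and $\partial_{\sigma_2}q_2>0$ (equal to $\lambda_3^{\cM_1}/\bc^2$ or $\lambda_2^{\cM_k}$ according to the model, always positive in the subsonic regime), $\partial_{\tau_2}q_2=\lambda_2^{\cM_1}=\bar u_2>0$, $\partial_{\sigma_1}s_1=\partial_{\sigma_2}s_2=0$ (the rarefaction branch lies on an isentrope), and $\partial_{\tau_2}s_2=-\gamma c_v/\brho_2$. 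The only new quantity is the pressure-increase term $g=\tfrac{\gamma R}{\gamma-1}T_1\big((p_2/p_1)^{(\gamma-1)/\gamma}-1\big)$ (and its multiple by $C_p q_2$ for \eqref{equ:CP2}). Since the contact wave leaves $p_2=\sigma_2$ and $u_2$ unchanged, $g$ is independent of $\tau_2$ in the \eqref{equ:CP1} case, and in every case the entropy row equals $(0,0,-\partial_{\tau_2}s_2)$. Expanding the determinant along this isolated row gives
\begin{equation}
\det D\Phi(\sigma_0,0) = -\,\partial_{\tau_2}s_2\;
\det\begin{pmatrix}\partial_{\sigma_1}q_1 & \partial_{\sigma_2}q_2\\[1mm]
\partial_{\sigma_1}g & \partial_{\sigma_2}g\end{pmatrix},
\end{equation}
which already disposes of the extra entry $\partial_{\tau_2}g$ arising for \eqref{equ:CP2}.

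To sign the remaining $2\times2$ block I would exploit that along the incoming rarefaction branch the gas stays on an isentrope, so that $T_1/p_1^{(\gamma-1)/\gamma}$ is constant and $g$ collapses to $g=\text{const}\cdot\big(p_2^{(\gamma-1)/\gamma}-p_1^{(\gamma-1)/\gamma}\big)$; by the $C^2$-matching at $p_\ast=p_k$ this identity also fixes the first-order derivatives at the base point. Hence $\partial_{\sigma_2}g>0$ and $\partial_{\sigma_1}g<0$, while $\partial_{\sigma_1}q_1,\partial_{\sigma_2}q_2>0$ and $-\partial_{\tau_2}s_2=\gamma c_v/\brho_2>0$, so the $2\times2$ determinant is strictly positive and $D\Phi$ is regular. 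The assumption $\bar\Pi_2>0$ enters precisely here: it forces $\bp_2>\bp_1$ and, for \eqref{equ:CP2}, $\bq_2>0$, so that $g>0$ at the base point and the relevant factors keep a fixed sign.

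In the configurations $j\in\{\cM_2,\cM_3\}$ there is no contact parameter and, as in the case $n_0=0$ of Theorem~\ref{th_riemann_junc}, the outgoing entropy is not a free variable but is simply assigned the value $s_1$ by \eqref{equ:CS}; the entropy equation then imposes no constraint on $(\sigma_1,\sigma_2)$ and the system collapses to the regular $2\times2$ mass/pressure block already treated. In all cases $D\Phi$ is invertible, so the implicit function theorem furnishes $\delta>0$, a neighbourhood of the base parameters, and a $C^1$ map $\varphi$ with $\varphi(\bU)=(\sigma_0,0)$ and $\Phi(\varphi(U);U)=\bar\Pi$ for $U\in B(\bU,\delta)$; restricting the associated standard Lax solutions to $x>0$ yields the unique $\Phi$-solution $\cR^\Phi(\tU)$, and the Lipschitz estimate follows from the $C^1$-regularity of $\varphi$ and of the Lax curves, exactly as in Theorem~\ref{th_riemann_junc}. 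I expect the sign analysis of $\partial_{\sigma_1}g$ and $\partial_{\sigma_2}g$ to be the main obstacle, and the isentropic reduction $g=\text{const}\cdot(p_2^{(\gamma-1)/\gamma}-p_1^{(\gamma-1)/\gamma})$ to be the decisive simplification that removes it.
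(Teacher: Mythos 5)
Your proposal follows essentially the same route as the paper's proof: parametrise the two traces by the Lax curves, reduce existence and uniqueness to the nonsingularity of the Jacobian of the compressor coupling map at the base state (exploiting that the entropy row is $(0,0,\ast)$ when $\cM_1$ is used at the outflow and drops out entirely otherwise), and conclude with the implicit function theorem plus $C^1$-regularity for the Lipschitz bound. The only divergence is in the intermediate signs of the pressure-increase term --- your isentropic reduction gives $\partial_{\sigma_1}\hat{\Phi}_2<0$ and $\partial_{\sigma_2}\hat{\Phi}_2>0$, whereas the paper asserts the opposite --- but a direct computation supports your signs, and either sign pattern makes $q_{\sigma_1}\partial_{\sigma_2}\hat{\Phi}_2-q_{\sigma_2}\partial_{\sigma_1}\hat{\Phi}_2$ nonzero, so both arguments reach the same conclusion.
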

%
\begin{proof}
We proceed as in Theorem~\ref{th_riemann_junc}
and study the determinants of the Jacobian matrices. Let us first start
with \eqref{equ:CP1} and the case where model $\cM_1$ is taken at the outflow.
Then the free parameter vector is $(\sigma_1,\sigma_2,\tau_2)$ and the
coupling conditions can be written in the form
$\Phi(\sigma_1,\sigma_2,\tau_2)-\bar{\Pi}=0$. More precisely, we have
\begin{equation}
\label{eq_coupling_comp1}
0 = \hat{\Phi}(\sigma_1,\sigma_2,\tau_2) :=
\begin{pmatrix}
  q_1(\sigma_1)+q_2(\sigma_2,\tau_2)\\[2mm]
  CT_1(\sigma_1)\left(
  \ds\left(\frac{p_2(\sigma_2,\tau_2)}{p_1(\sigma_1)}\right)^
  {\frac{\gamma-1}{\gamma}}-1\right)\\[4mm]
  s_1(\sigma_1)-s_2(\sigma_2,\tau_2)
\end{pmatrix}
-
\begin{pmatrix}
0\\
\bar{H}^\ast\\
0
\end{pmatrix}
\end{equation}
with $C\!=\!R\gamma/(\gamma-1)\!>\!0$. The derivatives taken
at constant state values read
\begin{align}
&\partial_{\sigma_2}q_2(\bp_2,0)=\frac{\lambda_3^{\cM_1}}{\bc_2^2},\;
\partial_{\sigma_2}p_2(\bp_2,0)=1,\;\partial_{\sigma_2}s_2(\bp_2,0)=0,\\
&\partial_{\tau_2}q_2(\bp_2,0)=\lambda_2^{\cM_1},\;
\partial_{\tau_2}p_2(\bp_2,0)=0,\;\partial_{\tau_2}s_2(\bp_2,0)=
-\frac{\gamma c_v}{\brho_2}
\end{align}
and
\begin{align}
&q_1'(\bp_1)=\frac{\lambda_3^{\cM_1}}{\bc_1^2},\;
p_1'(\bp_1)=1,\;T_1'(\bp_1)=\frac{\gamma-1}{\gamma R\brho_1},\;s_1'(\bp_1)=0\;
\text { if }1\in\I_i^{\cM_1},\\
&q_1'(\brho_1)=\lambda_2^{\cM_k},\;
p_1'(\brho_1)=\bc_1^2,\;T_1'(\brho_1)=
\frac{\gamma-1}{R}\kappa_1\brho_1^{\gamma-2},\;s_1'(\brho_1)=0\;
\text { if }1\in\I_i^{\cM_k}
\end{align}
for $k=2,3$. The Jacobian of $\hat{\Phi}$ evaluated at
$(\bp_1,\bp_2,0)$ if $1\in\I_i^{\cM_1}$
or $(\brho_1,\bp_2,0)$ if~$1\in\I_i^{\cM_2}\cup\I_i^{\cM_3}$ has the form
\begin{equation}
D_{(\sigma,\tau)}\hat{\Phi} =
\left(
\begin{array}{ccc}
q_{\sigma_1} & q_{\sigma_2} & q_{\tau_2} \\[1mm]
\partial_{\sigma_1}\hat{\Phi}_2 & \partial_{\sigma_2}\hat{\Phi}_2 &
\partial_{\tau_2}\hat{\Phi}_2\\[1mm]
0 & 0 & s_{\tau_2}
\end{array}
\right),
\end{equation}
where we used the notation $f_{\mu_i}=\partial_{\mu_i}f_i$
for $f=q,s$, and $\mu=\sigma,\tau$. A short calculation of
all derivatives of the second coupling condition $\hat{\Phi}_2$
reveals
\begin{equation}
\partial_{\sigma_1}\hat{\Phi}_2>0,\quad \partial_{\sigma_2}\hat{\Phi}_2<0,
\quad \partial_{\tau_2}\hat{\Phi}_2=0\,.
\end{equation}
Together with $s_{\tau_2}<0$, $q_{\sigma_1}>0$, and $q_{\sigma_2}>0$,
this shows
\begin{equation}
\det(D\hat{\Phi}) = s_{\tau_2} \left( q_{\sigma_1}\,
\partial_{\sigma_2}\hat{\Phi}_2 - q_{\sigma_2}\,\partial_{\sigma_1}\hat{\Phi}_2\right) > 0.
\end{equation}
If one of the models $\cM_2$ or $\cM_3$ is used in the outflow region, the
entropy equality becomes trivial and the coupling conditions reduce to
\begin{equation}
\label{eq_coupling_comp2}
0 = \hat{\Phi}(\sigma_1,\sigma_2) =
\begin{pmatrix}
  q_1(\sigma_1)+q_2(\sigma_2)\\[2mm]
  CT_1(\sigma_1)\left(
  \ds\left(\frac{p_2(\sigma_2)}{p_1(\sigma_1)}\right)^
  {\frac{\gamma-1}{\gamma}}-1\right)
\end{pmatrix}
-
\begin{pmatrix}
0\\
\bar{H}^\ast
\end{pmatrix}.
\end{equation}
Due to $q_2'(\brho_2)\!=\!\lambda_2^{\cM_k}>0$ and
$p_2'(\brho_2)\!=\!\bc_2^2>0$, we finally
conclude that
\begin{equation}
\det(D\hat{\Phi}) = q_{\sigma_1}\,
\partial_{\sigma_2}\hat{\Phi}_2 - q_{\sigma_2}\,\partial_{\sigma_1}\hat{\Phi}_2 < 0.
\end{equation}
It remains to study the case \eqref{equ:CP2}. Again we start with model $\cM_1$ at
the outflow. Then, the second component of the coupling conditions reads
\begin{equation}
\hat{\Phi}_2(\sigma_1,\sigma_2,\tau_2) = Cq_2(\sigma_2,\tau_2)T_1(\sigma_1)
\left( \left( \frac{p_2(\sigma_2,\tau_2)}{p_1(\sigma_1)} \right)^{\frac{\gamma-1}{\gamma}} -1 \right) - \bar{P}^\ast
\end{equation}
with $C=C_pR\gamma/(\gamma-1)>0$. We note $q_2>0$ and have $\bp_2>\bp_1$ due
to the assumption~$\bar{P}^\ast>0$. This gives the inequalities
\begin{equation}
\partial_{\sigma_1}\hat{\Phi}_2>0,\quad \partial_{\sigma_2}\hat{\Phi}_2<0,
\quad \partial_{\tau_2}\hat{\Phi}_2>0,
\end{equation}
and eventually $\det(D\hat{\Phi})>0$. Simplifying the model in the
outflow region to~$\cM_2$ or~$\cM_3$ leads to the coupling conditions
\begin{equation}
\label{eq_coupling_comp3}
0 = \hat{\Phi}(\sigma_1,\sigma_2) =
\begin{pmatrix}
  q_1(\sigma_1)+q_2(\sigma_2)\\[2mm]
  Cq_2(\sigma_2)T_1(\sigma_1)\left(
  \ds\left(\frac{p_2(\sigma_2)}{p_1(\sigma_1)}\right)^
  {\frac{\gamma-1}{\gamma}}-1\right)
\end{pmatrix}
-
\begin{pmatrix}
0\\
\bar{P}^\ast
\end{pmatrix}.
\end{equation}
The same arguments as above show $\det(D\hat{\Phi})<0$.

In all cases, the implicit function theorem guarantees the
existence of a unique $\Phi$-solution to the Riemann problem~\eqref{eq:RPNetwork}
for initial values $\tU$ varying in a small
neighborhood of $\bU$.
\end{proof}

\newpage

\section{The Cauchy Problem at the Junction}\label{sec:CauchyProblemJunction}
We first introduce a few notations.
\begin{definition}
Let
\begin{equation}
\begin{array}{rcll}
\|Y\| &\!=\!& \ds\sum_{i=1}^N \left\| Y^{(i)}\right\| & \mbox{for } Y\in\Omega,\\[5mm]
\|Y\|_{\bfL^1} &\!=\!& \ds\int_{\R^+}\|Y(x)\|\,dx & \mbox{for } Y\in\bfL^1(\R^+;\Omega)\\[5mm]
TV(Y) &\!=\!& \ds\sum_{i=1}^N TV(Y^{(i)}) & \mbox{for } Y\in\bfBV(\R^+;\Omega).
\end{array}
\end{equation}
For the extended variable $\bfq=(U,\Pi)$, a constant state $\bU$ and a constant vector $\bPi$, we consider the metric space
\begin{equation}
\label{def:spaceX}
X=(\bU+\bfL^1(\R^+;\Omega))\times (\bPi+\bfL^1(\R^+;\R^d))
\end{equation}
equipped with the distance and total variation (TV)
\begin{equation}\label{def:d_X}
\begin{array}{rll}
d_X((U,\Pi),(\tilde{U},\tilde{\Pi})) &\!=\!& \| U - \tilde{U} \|_{\bfL^1}
+ \| \Pi - \tilde{\Pi} \|_{\bfL^1}, \\[2mm]
TV(U,\Pi) &\!=\!& TV(U) + TV(\Pi) + \|\Phi(U(0^+)) - \Pi(0+) \|.
\end{array}
\end{equation}
For positive $\delta\in[0,\bdelta]$, we set
$D_\delta(\bfq)\!=\!\{ \bfq\in X:TV(\bfq)\le \delta\}$ and introduce
the set of varying states
$\cU_\delta(\bU)\!=\!\{ U\in\bU+\bfL^1(\R^+;\Omega):TV(U)\le \delta\}$
in the neighborhood of $\bU$.

\end{definition}
Let $G$ denote the vector of the right-hand side functions in~\eqref{network_euler_eqs}
for all pipes and be defined through
\begin{equation}
(G(t,Y))(x) = \left( G_{m_1}(x,t,Y^{(1)}),\ldots,G_{m_N}(x,t,Y^{(N)})\right).
\end{equation}
For the map $G:[0,T]\times \cU_{\bdelta}(\bY)\rightarrow \bfL^1(\R^+;\Omega)$,
we assume that there exist positive constants $L_1$ and $L_2$ such that
for all $t,s\in [0,T]$ the following inequalities are satisfied:
\begin{equation}
\begin{array}{rlll}
\|G(t,Y_1)-G(s,Y_2)\|_{\bfL^1}\! &\!\!\!\le\!\!\!& L_1 \left(\|Y_1\!-\!Y_2\|_{\bfL^1}
\! + |t-s| \right) & \mbox{for all }Y_1,Y_2\in \cU_{\bdelta}(\bY),\\[2mm]
TV(G(t,Y)) &\!\!\!\le\!\!\!& L_2 & \mbox{for all }Y\in \cU_{\bdelta}(\bY).
\end{array}
\end{equation}
This is the usual assumption on $G$, which also covers non-local terms
\cite{ColomboGuerra2007,ColomboGuerra2008} as well as real applications
\cite{ColomboGuerraHertySchleper2009}.

Next we define the Cauchy problem at junctions, which corresponds to our
special set of coupling conditions, and weak solutions.
\begin{definition}
\label{def_sol_cauchy}
Let $N>\dim(\I_i)>0$ and $\Phi$ defined through \eqref{equ:M}, \eqref{equ:H}, \eqref{equ:S}, or~$N\!=\!2$, $\|\nu_1\|\!=\!\|\nu_2\|$ and $\Phi$ defined through \eqref{equ:CM}, \eqref{equ:CS},
and either \eqref{equ:CP1} or~\eqref{equ:CP2}. A weak solution
$U=(U^{(1)},\ldots,U^{(N)})$ on $[0,T]$ to the Cauchy problem
\begin{equation}
\label{prob_cauchy}
\begin{array}{rcll}
\partial_t U^{(i)} + \partial_x F_{m_i}(U^{(i)}) &\!\!=\!\!& G_{m_i}(x,t,U^{(i)}), &
(x,t)\in\R^+\times\R^+,i=1,\ldots,N,\\[2mm]
\Phi(U(0^+,t)) &\!\!=\!\!& \Pi(t),& t\in\R^+,\;\Pi(t)\in\bPi+\bfL^1(\R^+;\R^d)\\[2mm]
U(x,0) &\!\!=\!\!& U_0(x), & x\in \R^+,\;U_0\in\bU+\bfL^1(\R^+;\Omega),
\end{array}
\end{equation}
is a function $U\in\bfC^0([0,T];\bU+\bfL^1(\R^+;\Omega))$
such that $U(t)\in\bfBV(\R^+;\Omega)$ for all \mbox{$t\in [0,T]$}, the initial conditions, $U(x,0)\!=\!U_0(x)$, and the condition at the junction, $\Phi(U(0^+,t))=\Pi(t)$, for a.e. $t>0$ are satisfied. Further,
for all $\varphi\in\bfC^{\infty}_c(\R^+\times (0,T);\R)$ it holds
\begin{equation}
\begin{array}{r}
\ds\int_{0}^{T}\int_{\R^+}\left( U^{(i)}\partial_t\varphi
+ F_{m_i}(U^{(i)})\partial_x\varphi + G_{m_i}(x,t,U^{(i)})\varphi\right)\,dx\,dt
\hspace{1.5cm}\\[2mm]
\ds = \int_{0}^{T}F_{m_i}(U^{(i)}(0^+,t))\,\varphi(0,t)\,dt.
\end{array}
\end{equation}
The weak solution is entropic if for all non-negative $\varphi\in\bfC^{\infty}_c(\Ro^+\times(0,T);\R^+)$
and $i=1,\ldots,N$ it holds
\begin{equation}
\int_0^T \int_{\R^+} \left( \eta_{m_i}\partial_t\varphi +
\psi_{m_i}\partial_x\varphi +
\partial_U\eta_{m_i}G_{m_i}(x,t,U^{(i)})\varphi\right)\,dx\,dt \ge 0.
\end{equation}
for all convex entropy-entropy flux pairs
$(\eta_{m_i}(U^{(i)}),\psi_{m_i}(U^{(i)}))$ of the model
$m_i\in\cM$.
\end{definition}

\subsection{The homogeneous Cauchy problem}
We first start with the homogeneous Cauchy problem, i.e, we set
$G_{m_i}=0$ in
(\ref{prob_cauchy}) for all $i=1,\ldots,N$. The function $\Pi(t)$
is assumed to be constant, $\Pi(t)=\bPi$, to cover the situations
discussed in Theorem~\ref{th_riemann_junc} and~\ref{th_riemann_comp}.
In this case, a solution of the Cauchy problem can be constructed
applying a proper modification of the wave front tracking method by
{\sc Bressan} \cite{Bressan2000} and its natural extension for networks
introduced by {\sc Colombo} and {\sc Mauri} \cite{ColomboMauri2008}. In
terms of the {\it standard Riemann semigroup}, we have the following
\begin{theorem}
\label{th:homog_cauchy}
Let $G_{m_i}\!=\!0$ for all $i=1,\ldots,N$, $\Pi(t)=\bPi$, and the assumptions of
Theorem~\ref{th_riemann_junc} or~\ref{th_riemann_comp} be satisfied
with constant values $\bar{\bfq}=(\bU,\bPi)$. Then there exist positive
constants $\delta$, $K$, a domain $D$, and a semigroup
$S:\R^+\times D\rightarrow D$ such that
\begin{enumerate}[(1)]
\item $\text{cl}_{\bfL_1} \cU_\delta(\bU)\subseteq D$.
\item For all $U\in D$, $S_0(U)=U$ and $S_sS_t(U)=S_{s+t}(U)$.
\item For all $U\in D$, the map $t\rightarrow S_t(U)$ is a weak
entropic solution to the homogeneous Cauchy problem (\ref{prob_cauchy})
in the sense of Definition \ref{def_sol_cauchy}.
\item For $\hU,\tU\in D$ and $s,t\ge 0$ it holds
\[
\|S_t(\hU)-S_s(\tU)\|_{\bfL^1}\le K\,(\|\hU-\tU\|_{\bfL^1}+|s-t|).
\]
\item If $U\in D$ is piecewise constant and $t>0$ sufficiently small,
then $S_t(U)$ coincides with the juxtaposition of the solutions to Riemann
problems centered at the points of jumps or at the junction.
\end{enumerate}
\end{theorem}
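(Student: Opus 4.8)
The plan is to construct the semigroup $S$ by the wave front tracking method of {\sc Bressan} \cite{Bressan2000}, adapted to the junction following {\sc Colombo} and {\sc Mauri} \cite{ColomboMauri2008}, using the Riemann solver established in Theorem~\ref{th_riemann_junc} (junction coupling) and Theorem~\ref{th_riemann_comp} (compressor coupling) as the elementary building block at $x=0$. First I would approximate the initial datum $U\in\cU_\delta(\bU)$ by a piecewise constant function $U^\varepsilon_0$ with finitely many jumps and $TV(U^\varepsilon_0)\le TV(U)$, and then build an approximate solution $U^\varepsilon(x,t)$ as follows: at each interior discontinuity I solve the standard Riemann problem \eqref{eq:RPClassic} by its Lax solution, replacing every rarefaction fan by a rarefaction front consisting of finitely many small jumps travelling with suitable speeds; at the junction $x=0$ the incoming $1$-waves are resolved through the $\Phi$-solution $\cR^\Phi$, producing the outgoing waves in the remaining pipes. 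Collisions of fronts and arrivals of fronts at the junction are processed in increasing order of interaction times, each event being handled by re-solving the local Riemann problem or the junction problem.

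The core of the argument is the derivation of bounds on the total variation that are uniform in $\varepsilon$. In the interior I would invoke the classical Glimm interaction estimates for the models $\cM_1,\cM_2,\cM_3$, which control the strength of outgoing waves by the product of interacting wave strengths. At the junction the decisive ingredient is the Lipschitz estimate \eqref{grm_lip_init}: since $\cR^\Phi$ depends Lipschitz-continuously on the incoming states, with constant $K$ uniform over a $\delta$-neighbourhood of the reference data, the total strength of the waves emitted into the outgoing pipes is bounded linearly by the total strength of the waves arriving along the incoming pipes. Concretely I would introduce a Glimm-type functional $\Upsilon(t)=TV(U^\varepsilon(t))+C\,Q(t)$, where $Q$ is the usual interaction potential augmented by a term measuring the approach of fronts to the junction, and show that $\Upsilon$ is non-increasing across every interaction once $C$ is chosen large and $\delta$ small. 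This yields a uniform bound $TV(U^\varepsilon(t))\le C'\,TV(U^\varepsilon_0)$, and, together with the finite propagation speed of the subsonic models, $\bfL^1$-Lipschitz continuity in $t$ of the approximations. The main obstacle is precisely this junction treatment: one must ensure that the heterogeneous coupling---different models and a varying number of outgoing waves---does not amplify wave strengths, and this is exactly what the uniform Lipschitz dependence of $\cR^\Phi$ guarantees.

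With these bounds in hand, Helly's compactness theorem provides a subsequence $U^{\varepsilon_n}\to U$ in $\bfC^0([0,T];\bfL^1_{\mathrm{loc}})$, and I would verify in the standard way that the limit $U$ is a weak entropic solution in the sense of Definition~\ref{def_sol_cauchy}. The interior weak formulation and the Lax entropy inequalities pass to the limit from the front tracking construction, while the trace condition $\Phi(U(0^+,t))=\bPi$ holds for a.e.\ $t$ because each approximate trace satisfies $\Phi(U^\varepsilon(0^+,t))\to\bPi$ as $\varepsilon\to 0$, by the residual bound built into the construction and the continuity of $\Phi$. Taking $D$ to be the $\bfL^1$-closure of $\cU_\delta(\bU)$ gives property~(1), and properties~(2) and~(3) follow from the uniqueness and consistency of the construction. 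Property~(5) is immediate for piecewise constant data and small $t$, since before the first interaction the approximate and exact solutions coincide with the juxtaposition of the local Riemann and junction solutions.

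It remains to organise these solutions into a Lipschitz semigroup and to establish the estimate~(4), which cannot be read off directly from the $TV$ bounds. Following Bressan's theory of standard Riemann semigroups, I would construct a Bressan--Liu--Yang type functional $\Theta(U,V)$, equivalent to the $\bfL^1$-distance, weighting the jumps between two front tracking solutions by their strengths and an interaction potential, and prove that $\Theta$ is almost non-increasing in time. At the junction the needed decay of $\Theta$ across the coupling is once more furnished by the Lipschitz constant $K$ of $\cR^\Phi$ from \eqref{grm_lip_init}, which controls the change of the outgoing traces of the two solutions by the change of their incoming traces; continuity in time with the same constant follows from the uniform wave speed bound in the subsonic region. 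This yields $\|S_t(\hU)-S_s(\tU)\|_{\bfL^1}\le K(\|\hU-\tU\|_{\bfL^1}+|s-t|)$ on $D$, completing the proof. As noted above, the genuinely new difficulty throughout---both in the BV interaction estimate and in the Lipschitz functional---is the control of the coupling at $x=0$, and in each place it is resolved by the uniform Lipschitz dependence established in Theorems~\ref{th_riemann_junc} and~\ref{th_riemann_comp}.
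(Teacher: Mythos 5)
Your proposal follows essentially the same route as the paper: wave front tracking with the junction Riemann solver of Theorems~\ref{th_riemann_junc} and~\ref{th_riemann_comp}, a Glimm-type functional whose decrease at the junction rests on the bound $|v^+|\le K_J|v^-|$ inherited from the Lipschitz estimate \eqref{grm_lip_init}, Helly compactness for existence, and a Bressan--Liu--Yang functional equivalent to the $\bfL^1$-distance for the Lipschitz semigroup property~(4). The only notable omission is the standard device of a simplified solver with non-physical waves to keep the number of fronts finite, and the paper realises the junction control by weighting waves approaching the junction with $2K_J$ in the linear part $V$ rather than by augmenting $Q$, but these are minor technical variants of the same argument.
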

\begin{proof}
The proof is based on the wave front tracking algorithm, see \cite{Bressan2000}.
The initial data $U_0$ is approximated by a piecewise constant function $\bU_0$
with a finite number of discontinuities such that $TV(\bU_0)\le\delta$ and
$\| U_0-\bU_0\|_{\bfL^1}\le\varepsilon$. Next, we approximate the local Riemann
problems at each point of a jump in the inner of a pipe by an approximate
Riemann solver - the {\it accurate solver} from \cite[Sect. 7.2]{Bressan2000}.
Rarefaction waves are substituted by rarefaction fans as specified by the
{\it accurate solver}. At junctions, we solve the Riemann problems by means of
our solution procedures introduced above. If $\delta$ is sufficiently small,
Theorems~\ref{th_riemann_junc} and~\ref{th_riemann_comp} ensure the existence and
uniqueness of corresponding solutions. This construction can be continued up to
the first collision among waves in a pipe or until a wave hits the junction. We then
apply the {\it simplified solver} from \cite[Sect. 7.2]{Bressan2000} and its
natural extension to junctions from \cite[Sect. 4.2]{ColomboMauri2008} in order to
keep the total number of waves finite. At a junction, waves with small strength
are reflected into a non-physical wave so that no wave in any other pipe is produced. For $\lim_{k\rightarrow\infty}\varepsilon_k=0$, the algorithm produces
a sequence of $\varepsilon$-solutions $\{U^{\varepsilon_k}\}_{k\in\N}$ in the
sense of {\sc Bressan} \cite[Lemma 7.1]{Bressan2000}. We will now show that
these solutions fulfill {\sc Helly}'s embedding theorem
\cite[Theorem 2.3]{Bressan2000}.

Fix a wave front tracking approximate solution $U^\varepsilon$. Then we have
to show the following three properties:
\begin{equation}
\label{cond_helly}
\begin{array}{rll}
TV(U^\varepsilon(\cdot,t)) &\le& C\quad\text{for all }t\ge 0,\\[1mm]
| U^\varepsilon(x,t) | &\le& M\quad\text{for all }x\in\R^+,\;t\ge 0,\\[1mm]
\int_{\R^+} | U^\varepsilon(x,t) - U^\varepsilon(x,s) |\,dx
&\le& L\,|t-s|\quad\text{for all }s,t\ge 0
\end{array}
\end{equation}
for some $C,M,L>0$. Let $\cJ_i$ and $\cA_i$ denote the set of discontinuities
and the set of approaching wave fronts in the $i$-th pipe for every $t>0$,
respectively, see \cite[Sect. 7.3]{Bressan2000}. The strengths of the waves
of the first, second and third family are denoted by $v_{j,\alpha}$, $j=1,2,3$
and $\alpha\in\cJ_i\cup\cA_i$ with $v=(\sigma,\tau)$ for outgoing waves of the most
accurate model $\cM_1$ and $v=\sigma$ otherwise. The notation $v_\alpha$ is
used when the wave type is not explicitly specified. We now define Glimm-type
functionals
\begin{equation}
\label{def_func_y}
Y(U^\varepsilon(\cdot,t)) = V(U^\varepsilon(\cdot,t)) + \hK_J \, Q(U^\varepsilon(\cdot,t))
\end{equation}
with a suitable constant $\hK_J>0$, which will be specified later,
and the functionals
\begin{equation}
\label{def:interaction-v}
\begin{array}{rll}
V(U^\varepsilon(\cdot,t)) &=& \sum_{i=1}^{N}V_i(U^\varepsilon(\cdot,t)),
\quad V_i = \sum_{\alpha\in\cJ_i}\,C_i(v_\alpha)\,|v_\alpha|,\\[1mm]
Q(U^\varepsilon(\cdot,t)) &=& \sum_{i=1}^{N}Q_i(U^\varepsilon(\cdot,t)),
\quad Q_i = \sum_{(\alpha,\beta)\in\cA_i}\,|v_\alpha|\,|v_\beta|.
\end{array}
\end{equation}
The interaction of waves for all models considered above are well
understood. Estimates for the interaction functional $Q$ are given, e.g., in
\cite[Lemma 4.1]{ColomboGuerra2008} and \cite[Sect. 4.2.]{ColomboMauri2008}.
Here, we focus on the interaction at junctions. In what follows, all wave fronts before interaction time $t$, at which a shock or rarefaction fan hits the junction, are denoted by $v^-$ and wave fronts after interaction time $t$ resulting
from our Riemann solver are denoted by $v^+$. We first recall a simple
consequence of the implicit function theorem applied in the proofs of
Theorems~\ref{th_riemann_junc} and~\ref{th_riemann_comp}. With the function
$\varphi:B(\bU,\delta)\rightarrow\cU(v_0)$, where $\varphi(U)=v$ and $\varphi(\bU)=v_0$,
there exists a constant $C>0$ such that for all $\tU\in B(\bU,\delta)$ with
$\varphi(\tU)=\tilde{v}$ it holds
\begin{equation}
\frac{1}{C}\, |v-\tilde{v}| \le \| U - \tilde{U} \|_{\bfL^1} \le C\,|v-\tilde{v}|.
\end{equation}
Identifying the states before and after interaction by $U^-$ and $U^+$ with
parameters $v^-$ and $v^+$ and setting $v_0=0$, which is always possible by
a change of the coordinate system, we get
\begin{equation}
\frac{1}{C}\,|v^+| \le \| U^+ - \bU \|_{\bfL^1} =
\| \cR^\Phi(U^-) - \cR^\Phi(\bU)\|_{\bfL^1} \le
\tilde{K}\,\| U^- - \bU \|_{\bfL^1} \le \tilde{K} C\,| v^- |
\end{equation}
with a positive constant $\tilde{K}$. From this, we conclude
with the general estimate
\begin{equation}
\label{est:interaction_v}
| v^+ | \le K_J | v^- |\,,\quad\quad K_J=\tilde{K}C^2.
\end{equation}
We are now ready to choose appropriate constants $C_i(v_\alpha)$
in (\ref{def:interaction-v}). We set
\begin{equation}
C_i(v_\alpha) = \left\{
\begin{array}{rll}
1, & i\in\I_o^{\cM_1}, & v_\alpha\text{ belongs to the 2- or 3-family},\\
1, & i\in\I_i^{\cM_1}, & v_\alpha\text{ belongs to the 1-family},\\
2K_J, & i\in\I_o^{\cM_1}, & v_\alpha\text{ belongs to the 1-family},\\
2K_J, & i\in\I_i^{\cM_1}, & v_\alpha\text{ belongs to the 2- or 3-family},\\
1, & i\in\I_o^{\cM_k}, & v_\alpha\text{ belongs to the 2-family, }k=2,3,\\
1, & i\in\I_i^{\cM_k}, & v_\alpha\text{ belongs to the 1-family, }k=2,3,\\
2K_J, & i\in\I_o^{\cM_k}, & v_\alpha\text{ belongs to the 1-family, }k=2,3,\\
2K_J, & i\in\I_i^{\cM_k}, & v_\alpha\text{ belongs to the 2-family, }k=2,3.
\end{array}
\right.
\end{equation}
We will now show the boundedness of the total variation of $U^\varepsilon(x,t)$.
Changes of the functional $V$ as function of $t$ can be expressed in the form
\begin{equation}
\triangle V(t)=\triangle\,V_1(t)+\triangle\,V_2(t)+\triangle\,V_3(t)
\end{equation}
with
\begin{equation}
\begin{array}{rll}
\triangle\,V_1(t) &=&
\ds \sum_{i\in\I_o^{\cM_1}}\sum_{\alpha\in\cJ_i}\left(
|v^+_{2,\alpha}|+|v^+_{3,\alpha}|-2K_J|v^-_{1,\alpha}| \right)\\[6mm]
&& \ds + \sum_{i\in\I_i^{\cM_1}}\sum_{\alpha\in\cJ_i}\left(
|v^+_{1,\alpha}|-2K_J\left(|v^-_{2,\alpha}|+|v^-_{3,\alpha}|\right) \right) \\[6mm]
\triangle\,V_k(t) &=&
\ds \sum_{i\in\I_o^{\cM_k}}\sum_{\alpha\in\cJ_i}\left(
|v^+_{2,\alpha}|-2K_J|v^-_{1,\alpha}| \right)\\[6mm]
&& \ds + \sum_{i\in\I_i^{\cM_k}}\sum_{\alpha\in\cJ_i}\left(
|v^+_{1,\alpha}|-2K_J|v^-_{2,\alpha}|\right),\quad k=2,3.
\end{array}
\end{equation}
Thus, due to estimate (\ref{est:interaction_v}), we have
$\triangle V_k(t)\le -K_JV_k(t-)$, $k=1,2,3$, and therefore
also $\triangle V(t)\le -K_JV(t-)$. Consequently, the map
$t\rightarrow V(t)$ is non-increasing over time at the junction.
For the map $t\rightarrow Q(t)$, we proceed analogously to
\cite{ColomboGaravello2008,ColomboMauri2008} and deduce
\begin{equation}
\ds \triangle Q(t) = Q(t+)-Q(t-)\le Q(t+)-
\sum_{i=1}^{N}\sum_{(\alpha,\beta)\in\cA_i} |v_{\alpha}v_{\beta}|
\le V(t+)^2\le V(t-)^2,
\end{equation}
from which we get the estimate
\begin{equation}
\begin{array}{rll}
\triangle Y(t) &=& \triangle V(t) + \hK_J\triangle Q(t)
\le \left( \hK_JV(t-)-K_J \right) V(t-) \\[2mm]
&\le& \left( \hK_JV(0)-K_J \right) V(t-).
\end{array}
\end{equation}
Let now $\hK_J$ be such that $\hK_JV(0)-K_J<0$.
Then $\triangle Y(t)\le 0$,
showing that the map $t\rightarrow Y(t)$ is also non-increasing.
Observe that the TV-norm is equivalent to $V$, i.e., there exists
a constant $C_1>0$ such that $TV(U)\le C_1\,V(U)$ and
$V(U)\le C_1\,TV(U)$ for all $U\in \cU_\delta(\bU)$. Thus, we can
estimate
\begin{equation}
\begin{array}{rll}
TV(U^\varepsilon(\cdot,t))\le C_1V(U^\varepsilon(\cdot,t)
\le C_1 Y(U^\varepsilon(\cdot,0))
\le C_1(C_1\delta+\hK_JC_1^2\delta^2),
\end{array}
\end{equation}
where we have used that $TV(U^\varepsilon(\cdot,0))\le\delta$.
Boundedness of $U^\varepsilon$ follows from the fact that
all $\varepsilon$-solutions are piecewise constant with a finite
number of discontinuities and have bounded variation. The stability
with respect to time - the third condition in (\ref{cond_helly}) -
has been proven in \cite[Sect. 7]{Bressan2000} independently of the
underlying geometry. As a consequence, an application of {\sc Helly}'s
compactness theorem yields the convergence of a subsequence
$\{U^{\varepsilon_l}\}_{l\in\N}$, where the limit function preserves
the properties of the $\varepsilon$-functions. The limit orbits
$t\rightarrow S_t(U)$ are solutions in the sense of Definition
\ref{def_sol_cauchy}, see \cite[Lemma 7.1]{Bressan2000}. This proves the existence
of a semigroup $S:\R^+\times D\rightarrow D$ and a domain $D$ such that
$\text{cl}_{\bfL^1}\cU_\delta(\bU)\subseteq D$. Hence, statements (1),
(2), (3), and (5) clearly hold.

We pass now to the $\bfL^1$-stability of the $\varepsilon$-solutions with
respect to the initial data, i.e., statement (4). For this aim, we will use
stability functionals from \cite{ColomboHertySachers2008,ColomboMauri2008}.
Let $U^\varepsilon$, $V^\varepsilon\in \cU_\delta(\bU)$ be two piecewise
constant $\varepsilon$-solutions and denote by $q_i^{(l)}(x)$ the shock sizes
implicitly given as solutions of
\begin{equation}
V^\varepsilon (x)= \left\{
\begin{array}{l}
\LX_3^{\cM_1}\left( q_i^{(3)},\LX_2^{\cM_1}
\left( q_i^{(2)},\LX_1^{\cM_1}\left( q_i^{(1)},U^\varepsilon (x)
\right)\right)\right),\; i\in\I^{\cM_1}\\[2mm]
\LX_2^{\cM_k}\left( q_i^{(2)},\LX_1^{\cM_k}
\left( q_i^{(1)},U^\varepsilon (x) \right)\right),\;
k=2,3,\;i\in\I^{\cM_2}\cup\I^{\cM_3},
\end{array}
\right.
\end{equation}
where $\I^{\cM_j}$ is the index set of all pipes with model $\cM_j$
and $\LX_l$, $l=1,2,3$, are the Lax-curves for the shock waves
(see also \cite[(8.4)]{Bressan2000}). For a compact notation, we set
$q_i^{(3)}(x)=0$ for $i\in\I^{\cM_2}\cup\I^{\cM_3}$. Now define the
functional
\begin{equation}
\Theta(U^\varepsilon,V^\varepsilon)=\sum_{i=1}^{N}\sum_{l=1}^{3}
\int_{\R^+} |q_{i}^{(l)}(x)|\, W_{i}^{(l)}(x)\,dx
\end{equation}
with weights
\begin{equation}
\begin{array}{rll}
W_{i}^{(1)}(x) &=& K_s \left( 1 + \kappa_1A_{i}^{(1)} + \kappa_1\kappa_2
\left(Y(U^\varepsilon(\cdot,t)) + Y(V^\varepsilon(\cdot,t)) \right)\right),\\[1mm]
W_{i}^{(j)}(x) &=& 1 + \kappa_1A_{i}^{(j)} + \kappa_1\kappa_2
\left(Y(U^\varepsilon(\cdot,t)) + Y(V^\varepsilon(\cdot,t)) \right),\;j=2,3.
\end{array}
\end{equation}
The constant $K_s$ is defined by
\begin{equation}
K_s = 1 + 2K_JW_{max}\left(\inf_{k=1,2,3}\left|
\lambda_1^{\cM_k}\right|\right)^{-1}\,
\max\left\{\sup\lambda_3^{\cM_1},\sup_{k=2,3}\lambda_2^{\cM_{k}}\right\}
\end{equation}
with $W_{max}=\max\{\max_{i,m_i=1}W_{i}^{(3)},\max_{i,m_i=2,3}W_{i}^{(2)}\}$
and $K_J$ from (\ref{est:interaction_v}). Further, $Y$ is the functional
defined in (\ref{def_func_y}), while the functions $A_{i}^{(j)}$ are defined
in dependence of the models $\cM_{m_i}$ on the $i$-th pipe by $A_{i}^{(j)} := A_{i,j}^{\cM_{m_i}}$ with
\begin{equation}
\begin{array}{rll}
A_{i,j}^{\cM_{2/3}} &=&
\ds\sum \left\{
\left| v_{k_\alpha,\alpha}\right|:
\begin{array}{l}
x_\alpha < x,\;j<k_\alpha\le 2\\
x_\alpha > x,\;1\le k_\alpha < j
\end{array}
\right\}\\[4mm]
&& + \left\{
\begin{array}{ll}
\ds\sum \left\{
\left| v_{j,\alpha}\right|:
\begin{array}{l}
x_\alpha < x,\;\alpha\in J_i(U^\varepsilon)\\
x_\alpha > x,\;\alpha\in J_i(V^\varepsilon)
\end{array}
\right\}
&\,\;\text{if }q_{i}^{(j)}<0,\\[4mm]
\ds\sum \left\{
\left| v_{j,\alpha}\right|:
\begin{array}{l}
x_\alpha < x,\;\alpha\in J_i(V^\varepsilon)\\
x_\alpha > x,\;\alpha\in J_i(U^\varepsilon)
\end{array}
\right\}
&\,\;\text{if }q_{i}^{(j)}\ge 0
\end{array}
\right.
\end{array}
\end{equation}
and
\begin{equation}
\begin{array}{rll}
A_{i,j}^{\cM_{1}} &=&
\ds\sum \left\{
\left| v_{k_\alpha,\alpha}\right|:
\begin{array}{l}
x_\alpha < x,\;j<k_\alpha\le 3,\\
x_\alpha > x,\;1\le k_\alpha <j
\end{array}
\right\}\\[4mm]
&& + \left\{
\begin{array}{ll}
\ds\sum_{j\ne 2} \left\{
\left| v_{j,\alpha}\right|:
\begin{array}{l}
x_\alpha < x,\;\alpha\in J_i(U^\varepsilon)\\
x_\alpha > x,\;\alpha\in J_i(V^\varepsilon)
\end{array}
\right\}
&\,\;\text{if }q_{i}^{(j)}<0,\\[4mm]
\ds\sum_{j\ne 2} \left\{
\left| v_{j,\alpha}\right|:
\begin{array}{l}
x_\alpha < x,\;\alpha\in J_i(V^\varepsilon)\\
x_\alpha > x,\;\alpha\in J_i(U^\varepsilon)
\end{array}
\right\}
&\,\;\text{if }q_{i}^{(j)}\ge 0.
\end{array}
\right.
\end{array}
\end{equation}
These definitions are standard and can be found, e.g., in
\cite[(8.9)]{Bressan2000}, \cite[p. 566]{ColomboMauri2008}, and
\cite{ColomboHertySachers2008}. Now we fix $\kappa_1$ and $\kappa_2$
in such a way that $\delta>0$ in the definition of $\cU_\delta$
can be chosen to uniformly satisfy $1\le W_{i}^{(j)}(x)\le \kappa$
for every $i=1,\ldots,N$ and $j=1,2,3$. Indeed, since $K_s\ge 1$ and
$A_{i}^{(j)}\ge 0$, the lower bound is obvious. By definition, we
have $A_{i}^{(j)}\le \kappa_0 ( Y(U^\varepsilon(\cdot,t))+Y(V^\varepsilon(\cdot,t)))$
with a certain constant $\kappa_0>0$ and consequently $W_{i}^{(j)} \le K (1+2\delta(\kappa_0\kappa_1+\kappa_1\kappa_2))$,
where $K=1$ for $j=2,3$ and $K=K_s$ for $j=1$. Hence, the functional $\Theta$
is equivalent to the $\bfL^1$ distance, i.e.,
\begin{equation}
\frac{1}{C_1} \|U^\varepsilon - V^\varepsilon \|_{\bfL^1}
\le \Theta(U^\varepsilon,V^\varepsilon)
\le C_1\|U^\varepsilon - V^\varepsilon \|_{\bfL^1}
\end{equation}
with a positive constant $C_1$. Applying the same calculations as
in the proof of \cite[Theorem 8.2]{Bressan2000} shows that at any time
$t>0$ when an interaction happens neither in $U^\varepsilon$ nor
$V^\varepsilon$, we have
\begin{equation}
\frac{d}{dt}\Theta(U^\varepsilon(\cdot,t),V^\varepsilon(\cdot,t))
\le C_2 \varepsilon
\end{equation}
with a positive constant $C_2$ that depends only on a bound on the
total variation of the initial data. At an interaction time $t>0$,
we get from above
$\triangle (Y(U^\varepsilon(\cdot,t)) + Y(V^\varepsilon(\cdot,t)))<0$
and hence, by choosing $\kappa_2$ large enough, we obtain
\begin{equation}
\triangle\Theta(U^\varepsilon(\cdot,t),V^\varepsilon(\cdot,t))<0.
\end{equation}
Thus, $\Theta(U^\varepsilon(\cdot,t),V^\varepsilon(\cdot,t))-
\Theta(U^\varepsilon(\cdot,s),V^\varepsilon(\cdot,s))\le C_2(t-s)$
for all $0\le s\le t$. This concludes the proof of statement (4)
by standard arguments given in \cite[Sect. 8.3]{Bressan2000}.
\end{proof}

\subsection{The inhomogeneous Cauchy problem}
Including the source terms $G_{m_i}$ in (\ref{prob_cauchy}) for all
$i=1,\ldots,N$, we have the following result for the
well-posedness of the Cauchy problem:
\begin{theorem}
Let the assumptions of Theorem~\ref{th_riemann_junc}
or~\ref{th_riemann_comp} be satisfied with constant values
$\bar{\bfq}=(\bU,\bPi)$. Let $\cT_t$ be the right
translation defined by $(\cT_t\Pi)(s)=\Pi(t+s)$.
Then there exist positive constants $\delta$, $\delta'$, $K$, domains
$D^t$ for $t\in [0,T]$, and a map~$\cE(s,t_0):D^{t_0}\rightarrow D_\delta$
with $t_0\in[0,T]$ and $s\in [0,T-t_0]$ such that
\begin{enumerate}[(1)]
\item $D_{\delta'}(\bar{\bfq})\subseteq D^t\subseteq D_{\delta}(\bar{\bfq})$
for all $t\in[0,T]$.
\item $\cE(0,t_0)\bfq=\bfq$ for all $t_0\in [0,T],\;\bfq\in D^t$.
\item $\cE(s,t_0)D^{t_0}\subset D^{t_0+s}$ for all
$t_0\in [0,T],\;s\in [0,T-t_0]$.
\item For all $t_0\in [0,T]$, $s_1,s_2\ge 0$
with $s_1+s_2\in [0,T-t_0]$
\[ \cE(s_2,t_0+s_1)\circ\cE(s_1,t_0)=\cE(s_1+s_2,t_0). \]
\item For all $(U_0,\Pi)\in D^{t_0}$, the map
$t\rightarrow\cE(t,t_0)(U_0,\Pi)=(U(t),\cT_t\Pi)$
is the entropic solution to the Cauchy problem~\eqref{prob_cauchy}
in the sense of Definition~\ref{def_sol_cauchy}.
\item For all $t_0\in [0,T]$ and $(U_0,\Pi)\in D^{t_0}$
\[ \lim_{t\rightarrow 0}\frac{1}{t}
\|U(t)-(S_t(U_0,\Pi)+tG(t_0,U_0))\|_{\bfL^1}=0,\]
where $(U(t),\cT_t\Pi)=\cE(t,t_0)(U_0,\Pi)$ and $S_t$ denotes the semigroup generated
from \eqref{prob_cauchy} with $G=0$.
\item For all $t_0\in [0,T],\;s\in [0,T-t_0]$ and
$\bfq,\tilde{\bfq}\in D^{t_0}$
\[ \| \cE(s,t_0)\bfq-\cE(s,t_0)\tilde{\bfq}\|_{\bfL^1} \le K \,\|U-\tU\|_{\bfL^1}
+ K \int_{t_0}^{t_0+s} \| \Pi(t) - \tilde{\Pi}(t)\|_{\bfL^1}\,dt.\]
\end{enumerate}
\end{theorem}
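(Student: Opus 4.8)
The plan is to build the inhomogeneous solution operator $\cE$ from the homogeneous semigroup $S$ of Theorem~\ref{th:homog_cauchy} by an operator-splitting (fractional-step) scheme, following the framework of {\sc Bressan} \cite{Bressan2000} and its balance-law extension by {\sc Colombo} and {\sc Guerra} \cite{ColomboGuerra2007,ColomboGuerra2008}. Fix a time step $\Delta t=(T-t_0)/n$ and define Euler approximations $\cE^{\Delta t}(s,t_0)$ by alternating, on each subinterval, one homogeneous step $S_{\Delta t}$ with one explicit source step $U\mapsto U+\Delta t\,G(t_k,U)$, where $t_k=t_0+k\Delta t$. Since $\Pi(t)$ enters the construction only through the right translation $\cT_t\Pi$ in the junction condition, the $\Pi$-component of $\bfq$ is simply transported, so it suffices to track the $U$-component together with the time-dependent coupling data.

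Next I would establish uniform a~priori bounds guaranteeing that the approximations remain in a fixed tube $D_\delta(\bar{\bfq})$. The homogeneous step is total-variation non-increasing at the junction by the analysis in the proof of Theorem~\ref{th:homog_cauchy}, while each source step increases the total variation by at most $\Delta t\,L_2$ by the assumption $TV(G(t,Y))\le L_2$; a discrete Gronwall argument then yields $TV(U(t))\le e^{C(t-t_0)}(TV(U_0)+L_2(t-t_0))$ uniformly in $\Delta t$. Choosing $\delta'$ small enough relative to $\delta$ and $T$, this produces the nested domains $D_{\delta'}(\bar{\bfq})\subseteq D^t\subseteq D_\delta(\bar{\bfq})$ of statement~(1), with $D^t$ defined as the set reachable from $D_{\delta'}(\bar{\bfq})$ up to time $t$.

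The convergence as $\Delta t\to 0$ and the Lipschitz stability~(7) then follow from a Gronwall estimate based on the stability functional $\Theta$ already used in Theorem~\ref{th:homog_cauchy}: between interaction times the source steps perturb $\Theta$ by at most $O(\Delta t)$ times the $\bfL^1$ distance, and the Lipschitz property of $G$ in its $Y$-argument closes the estimate, giving both a Cauchy sequence in $\bfC^0([t_0,T];\bfL^1(\R^+;\Omega))$ and the $\bfL^1$-Lipschitz bound, where the additional $\int\|\Pi-\tilde{\Pi}\|$ term arises from the translated coupling data. The cocycle identity~(4), the initial condition~(2) and the invariance~(3) pass to the limit directly from the corresponding properties of the scheme, and the tangency estimate~(6) is exactly the first-order consistency of the splitting, $\cE(t,t_0)(U_0,\Pi)=S_t(U_0,\Pi)+t\,G(t_0,U_0)+o(t)$ in $\bfL^1$ as $t\to 0$.

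The main obstacle I expect is statement~(5): verifying that the limit is an \emph{entropic} weak solution that still satisfies the junction condition $\Phi(U(0^+,t))=\Pi(t)$ now that $\Pi$ is genuinely time dependent. The source acts near the junction during each splitting step and could, a priori, destroy the coupling relations that Theorem~\ref{th_riemann_junc} enforces only for the homogeneous Riemann solver; one must show that the fractional-step reconstruction re-establishes $\Phi=\Pi$ after each source step up to an error that vanishes in the limit, and that the extra term $\partial_U\eta_{m_i}\,G_{m_i}$ in the entropy inequality is correctly produced in the limit. This requires adapting the boundary/junction entropy analysis of \cite{ColomboGuerra2007,ColomboGuerra2008} to the present coupling conditions, using the $C^1$-regularity of $\Phi$ and the implicit-function representation $v=\varphi(U)$ from the proofs of Theorems~\ref{th_riemann_junc} and~\ref{th_riemann_comp} to control how the source displaces the traces $U^{(j)}(0^+,t)$.
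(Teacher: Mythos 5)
Your proposal follows essentially the same route as the paper: an operator-splitting (Euler polygonal) construction built on the homogeneous semigroup $S_t$ of Theorem~\ref{th:homog_cauchy}, with the domains, convergence and the Lipschitz estimate~(7) controlled by the weighted stability functional $\Theta$ extended by a term $\bar{K}\|\Pi-\tilde{\Pi}\|_{\bfL^1}$. The only substantive difference is one of packaging: the paper verifies that the composite step $F_{t,t_0}(U,\Pi)=(S_t(U,\Pi)+tG(t_0,S_t(U,\Pi)),\cT_t\Pi)$ satisfies $\Theta(F_{\varepsilon,t_0}\bfq,F_{\varepsilon,t_0}\tilde{\bfq})\le(1+C\varepsilon)\,\Theta(\bfq,\tilde{\bfq})$ and then invokes the abstract local-flow theorem of Colombo--Guerra to obtain the limit semigroup, the cocycle identity~(4), the tangency condition~(6), and the entropic/junction properties in~(5) in one stroke, rather than re-deriving the convergence, the discrete Gronwall bounds, and the trace analysis at the junction by hand as you outline.
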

%
\begin{proof}
The proof of this theorem is based on the operator splitting technique
introduced by {\sc Colombo} and {\sc Guerra} \cite{ColomboGuerra2009}
in the framework of differential equations in metric spaces and the
techniques applied by {\sc Colombo}, {\sc Guerra}, {\sc Herty}, and
{\sc Schleper} \cite{ColomboGuerraHertySchleper2009}. In the metric
space $X$ defined in (\ref{def:spaceX}), the following map is considered:
\begin{equation}
F_{t,t_o}(U,\Pi) = \left( S_t(U,\Pi)+tG(t_o,S_t(U,\Pi)),\cT_t\Pi \right).
\end{equation}
It approximates the solution of the inhomogeneous Cauchy problem in the
time interval $[t_o,t_o+t]$. Let $\bfq=(U,\Pi)$, $\tilde{\bfq}=(\tU,\tilde{\Pi})$
and extent the stability function to
\begin{equation}
\Theta(\bfq,\tilde{\bfq})=\sum_{i=1}^{N}\sum_{l=1}^{3}
\int_{\R^+} |q_{i}^{(l)}(x)|\, W_{i}^{(l)}(x)\,dx + \bar{K}
\|\Pi - \tilde{\Pi} \|_{\bfL^1}.
\end{equation}
This extension keeps all properties of the original function $\Theta$
and therefore there exists again a positive constant $C_1$ such that for all
$\bfq$, $\tilde{\bfq}\in D_\delta$ it holds
\begin{equation}
\frac{1}{C_1} d_X(\bfq,\tilde{\bfq})
\le \Theta (\bfq,\tilde{\bfq}) \le C_1 d_X(\bfq,\tilde{\bfq})
\end{equation}
with $d_X$ defined in (\ref{def:d_X}).
Then, the same calculation as in \cite[Lemma 4.7]{ColomboGuerraHertySchleper2009}
yields
\begin{equation}
\Theta(F_{\varepsilon,t_o}\bfq,F_{\varepsilon,t_o}\tilde{\bfq}) \le (1+C\varepsilon)\,\Theta(\bfq,\tilde{\bfq})
\end{equation}
for all $t_o\in [0,T]$, for all $\bfq,\tilde{\bfq}\in D$ (the domain of the
semigroup $S$ from Theorem~\ref{th:homog_cauchy}), $\varepsilon>0$ sufficiently small and a positive constant $C$. This also gives the Lipschitz dependence of $F$ from
$t$ and $\bfq$, see \cite[Proposition 4.8]{ColomboGuerraHertySchleper2009}. Following
\cite[Definition 2.2]{ColomboGuerra2009}, we next construct the Euler $\varepsilon$-polygonal
$F^\varepsilon$ generated by $F$ on the interval $[t_o,t_o+t]$,
\begin{equation}
F^\varepsilon_{t,t_o}\bfq = F_{t-k\varepsilon,t_o+k\varepsilon}\circ
F_{\varepsilon,t_o+k\varepsilon}\circ\ldots\circ F_{\varepsilon,t_o}\bfq,
\end{equation}
where $k=\max\{h\in\N:h\varepsilon<t_o+t\}$. In such a way, the solution
of the inhomogeneous Cauchy problem is approximated by the solution of
the homogeneous Cauchy problem, corrected by an Euler approximation of
the source term after every time step of length $\varepsilon$. It follows
from \cite[Proposition 4.9]{ColomboGuerraHertySchleper2009} that this Euler
approximation is a {\it local flow} in the sense of \cite{ColomboGuerra2009}
and hence allows to apply \cite[Theorem 2.6]{ColomboGuerra2009}. This
ensures the existence of a unique limit semigroup $\cE$ generated by
$F$ and being first order tangent to $F$ in the sense of the tangency
condition (6). With this key observation, the remaining
properties stated in the theorem directly follow from standard results, see,
e.g., the proof of Theorem 2.3 in \cite{ColomboGuerraHertySchleper2009}.
\end{proof}

\section*{Acknowledgments}
This work was supported by the
German Research Foundation within the collaborative research center
TRR154 ``Mathematical Modeling, Simulation and Optimization Using
the Example of Gas Networks'' (DFG-SFB TRR154/2-2018, TP B01).

\bibliographystyle{plain}
\bibliography{bibeuler}

\begin{thebibliography}{10}

\bibitem{BandaHertyKlar2006b}
M.K. Banda, M.~Herty, and A.~Klar.
\newblock Coupling conditions for gas networks governed by the isothermal
  {E}uler equations.
\newblock {\em Netw.~Heterog.~Media}, 1:295--314, 2006.

\bibitem{BandaHertyKlar2006a}
M.K. Banda, M.~Herty, and A.~Klar.
\newblock Gas flow in pipeline networks.
\newblock {\em Netw.~Heterog.~Media}, 1:41--56, 2006.

\bibitem{Bressan2000}
A.~Bressan.
\newblock {\em Hyberbolic Systems of Conservation Laws: The One-dimensional
  Cauchy Problem}, volume~20 of {\em Oxford Lecture Series in Mathematics and
  Its Application}.
\newblock Oxford University Press, 2000.

\bibitem{BrouwerGasserHerty2011}
J.~Brouwer, I.~Gasser, and M.~Herty.
\newblock Gas pipeline models revisited: model hierarchies, nonisothermal
  models, and simulations of networks.
\newblock {\em Multiscale Model. Simul.}, 9:601--623, 2011.

\bibitem{ColomboGaravello2006}
R.M. Colombo and M.~Garavello.
\newblock A well posed {R}iemann problem for the $p$-system at a junction.
\newblock {\em Netw.~Heterog.~Media}, 1:495--511, 2006.

\bibitem{ColomboGaravello2008}
R.M. Colombo and M.~Garavello.
\newblock On the {C}auchy problem for the p-system at a junction.
\newblock {\em SIAM J. Math. Anal.}, 39:1456--1471, 2008.

\bibitem{ColomboGuerra2007}
R.M. Colombo and G.~Guerra.
\newblock Hyperbolic balance laws with a non local source.
\newblock {\em Commun. Partial Differential Equations}, 32:1917--1939, 2007.

\bibitem{ColomboGuerra2008}
R.M. Colombo and G.~Guerra.
\newblock Hyperbolic balance laws with a dissipative non local source.
\newblock {\em Commun. Pure Appl. Anal.}, 7:1077--1090, 2008.

\bibitem{ColomboGuerra2009}
R.M. Colombo and G.~Guerra.
\newblock Differential equations in metric spaces with applications.
\newblock {\em Discrete and Continuous Dynamical Systems, Series A},
  23:733--753, 2009.

\bibitem{ColomboGuerraHertySchleper2009}
R.M. Colombo, G.~Guerra, M.~Herty, and V.~Schleper.
\newblock Optimal control in networks and pipes and canals.
\newblock {\em SIAM J. Control Optim.}, 48:2032--2050, 2009.

\bibitem{ColomboHertySachers2008}
R.M. Colombo, M.~Herty, and V.~Sachers.
\newblock On $2\times 2$ conservation laws at a junction.
\newblock {\em SIAM J. Math. Anal.}, 40:605--622, 2008.

\bibitem{ColomboMauri2008}
R.M. Colombo and C.~Mauri.
\newblock Euler systems for compressible fluids at a junction.
\newblock {\em J.~Hyperbol.~Differ.~Eq.}, 5:547--568, 2008.

\bibitem{DomschkeDuaStolwijkLangMehrmann2018}
P.~Domschke, A.~Dua, J.J. Stolwijk, J.~Lang, and V.~Mehrmann.
\newblock Adaptive refinement strategies for the simulation of gas flow in
  networks using a model hierarchy.
\newblock {\em Electronic Transactions on Numerical Analysis}, 48:97--113,
  2018.

\bibitem{DomschkeKolbLang2011}
P.~Domschke, O.~Kolb, and J.~Lang.
\newblock Adjoint-based control of model and discretization errors for gas flow
  in networks.
\newblock {\em Int. J. Mathematical Modelling and Numerical Optimisation},
  2:175--193, 2011.

\bibitem{DomschkeKolbLang2015}
P.~Domschke, O.~Kolb, and J.~Lang.
\newblock Adjoint-based error control for the simulation and optimization of
  gas and water supply networks.
\newblock {\em Appl. Math. Computat.}, 259:1003--1018, 2015.

\bibitem{EhrhardtSteinbach2005}
K.~Ehrhardt and M.C. Steinbach.
\newblock Nonlinear optimization in gas networks.
\newblock In H.G. Bock, E.~Kostina, H.X. Phu, and R.~Rannacher, editors, {\em
  Modeling, Simulation and Optimization of Complex Processes, Proceedings of
  the International Conference on High Performance Scientific Computing, March
  10-14, 2003, Hanoi, Vietnam}, pages 139--148, 2005.

\bibitem{Herty2008}
M.~Herty.
\newblock Coupling conditions for networked systems of {E}uler equations.
\newblock {\em SIAM J. Sci. Comput.}, 30:1596--1612, 2008.

\bibitem{LangMindt2018}
J.~Lang and P.~Mindt.
\newblock Entropy-preserving coupling conditions for one-dimensional {E}uler
  systems at junctions.
\newblock {\em Netw.~Heterog.~Media}, 13:177--190, 2018.

\bibitem{LeVeque2002}
R.J. LeVeque.
\newblock {\em Finite-Volume Methods for Hyperbolic Problems}.
\newblock Cambridge Texts in Applied Mathematics. Cambridge University Press,
  2002.

\bibitem{Menon2005}
E.S. Menon.
\newblock {\em Gas Pipeline Hydraulics}.
\newblock Taylor \& Francis, 2005.

\bibitem{Osiadacz1996}
A.J. Osiadacz.
\newblock Different transient flow models – limitations, advantages, and
  disadvantages.
\newblock in 28th Annual Meeting of PSIG San Francisco, 9606 PSIG Conference
  Paper, OnePetro, Richardson, 1996.

\bibitem{OsiadaczChaczykowski2001}
A.J. Osiadacz and M.~Chaczykowski.
\newblock Comparison of isothermal and non-isothermal pipeline gas flow models.
\newblock {\em Chem. Engrg. J.}, 81:41--51, 2001.

\bibitem{Reigstad2014}
G.A. Reigstad.
\newblock Numerical network models and entropy principles for isothermal
  junction flow.
\newblock {\em Netw.~Heterog.~Media}, 9:65--95, 2014.

\bibitem{Reigstad2015}
G.A. Reigstad.
\newblock Existence and uniqueness of solutions to the generalized {R}iemann
  problem for isentropic flow.
\newblock {\em SIAM J. Appl. Math.}, 75:679--702, 2015.

\bibitem{Toro2009}
T.~Toro.
\newblock {\em Riemann solvers and numerical methods for fluid dynamics: a
  practical introduction}.
\newblock Springer, 2009.

\end{thebibliography}
\end{document}